\documentclass[12pt]{amsart}
\usepackage[fullpage]{mymacros}
\newcommand{\ts}{T}
\newcommand{\tdel}{\widetilde{\partial}}
\newcommand{\tschub}{\widetilde{\mathfrak S}}
\newcommand{\schub}{\mathfrak S}

\title{Twisted Schubert polynomials}
\author{Ricky Ini Liu}
\address{Ricky Ini Liu, Department of Mathematics, North Carolina State University, Raleigh, NC 27695}
\email{riliu@ncsu.edu}
\thanks{The author is partially supported by a National Science Foundation Grant (DMS 1758187).}

\begin{document}
	\maketitle

	\begin{abstract}
		We prove that twisted versions of Schubert polynomials defined by $\tschub_{w_0} = x_1^{n-1}x_2^{n-2} \cdots x_{n-1}$ and $\tschub_{ws_i} = (s_i+\del_i)\tschub_w$ are monomial positive and give a combinatorial formula for their coefficients. In doing so, we reprove and extend a previous result about positivity of skew divided difference operators and show how it implies the Pieri rule for Schubert polynomials. We also give positive formulas for double versions of the $\tschub_w$ as well as their localizations.
	\end{abstract}

\section{Introduction}

The operators $T_i = s_i + \del_i$ on the polynomial ring $\CC[x_1, \dots, x_n]$ (where $s_i$ switches $x_i$ and $x_{i+1}$, and $\del_i = \frac{1-s_i}{x_i-x_{i+1}}$ is the divided difference operator) satisfy the Coxeter relations and therefore define a twisted action of the symmetric group on polynomials. The action of these operators, particularly on the quotient $\CC[x_1, \dots, x_n]/I$ where $I$ is the ideal generated by symmetric polynomials with no constant term, has been studied previously due to its relation to: the Chern-Schwartz-MacPherson (CSM) classes of Schubert cells in flag varieties \cite{AluffiMihalcea, AMSS}, Maulik-Okounkov stable envelopes \cite{MaulikOkounkov, RimanyiVarchenko}, and Demazure-Lusztig operators and Hecke algebras \cite{LLT, LLT2}. Similar operators have also been considered in the context of generalized Schubert, key, and Grothendieck polynomials \cite{Kirillov2}. 

In this paper, we consider a twisted analogue of Schubert polynomials defined for permutations $w \in S_n$ by $\tschub_{w_0} = x_1^{n-1}x_2^{n-2} \cdots x_{n-1}$ for $w=w_0 = n \cdots 321$, and $\tschub_{ws_i} = T_i \tschub_w$. (An important note: we do not, as in many of the references above, consider these as classes modulo $I$ but instead as polynomials in their own right.) The minimum degree part of $\tschub_w$ is the usual Schubert polynomial $\schub_w$. It is well known \cite{ BergeronBilley, BilleyJockuschStanley, FominStanley} that Schubert polynomials are monomial positive. Although the operators $T_i$ do not in general preserve monomial positivity, our main result is that the polynomials $\tschub_w$ are always monomial positive, and we give a combinatorial formula for their coefficients in Theorem~\ref{thm:positive} in terms of certain chains in Bruhat order.

Our proof of Theorem~\ref{thm:positive} is entirely algebraic and is closely related to the study of skew divided difference operators (see \cite{Kirillov, Liudd, Macdonald}), which arise when applying the twisted Leibniz rule for ordinary divided difference operators. It was shown in \cite{Liudd} that the skew divided difference operators can always be expressed in terms of the usual divided difference operators $\del_{ij}$ for $i<j$ with positive coefficients. We extend and give an alternate proof of that result here, and as an application, we demonstrate how this result can be used to derive the Pieri rule for Schubert polynomials.

We also define a ``double version'' of the twisted Schubert polynomials $\tschub_w(x,y)$ in two sets of variables (which, up to signs, correspond to equivariant CSM classes of Schubert cells). Our combinatorial formula naturally extends to this setting, and we also give a positive formula for the localizations $\tschub_v(y_{w(1)}, \dots, y_{w(n)}; y_1, \dots, y_n)$. (An equivalent formula for these localizations can also be found in \cite{AMSS, Su}.)

We begin by reviewing background on the symmetric group, divided difference operators, and Schubert polynomials in \S2. In \S3, we study the twisted operators $T_i$ and relate them to skew divided difference operators. In particular, we prove that they exhibit certain positivity properties, and we compute their actions on elementary and complete homogeneous symmetric polynomials. In \S4, we apply the results of the previous section to prove that $\tschub_w$ are monomial positive and give a combinatorial formula for their coefficients. We also give a formula for the double polynomials $\tschub_w(x,y)$ and their localizations. We conclude in \S5 with some closing remarks.

\section{Background}
In this section, we give some background about the symmetric group, divided difference operators, and Schubert polynomials.
\subsection{The symmetric group} \label{sec:symmetric}
Let $S_n$ denote the \emph{symmetric group} on $[n]$. We will write $s_{ij}$ for the transposition switching $i$ and $j$, and we will abbreviate $s_i = s_{i,i+1}$ for the \emph{simple transposition} switching $i$ and $i+1$. We denote by $w_0 \in S_n$ the permutation $n \cdots 321$.

For any $w \in S_n$, a \emph{reduced expression} or \emph{reduced word} is an expression $s_{i_1}s_{i_2} \cdots s_{i_{\ell(w)}}$ for $w$ as a product of simple transpositions of minimal length $\ell(w)$. Any two reduced expressions for $w$ can be transformed into one another by applying a sequence of \emph{Coxeter relations} of the form $s_is_{i+1}s_i = s_{i+1}s_is_{i+1}$ and $s_is_j = s_js_i$ for $|i-j| > 1$. (In other words, the third Coxeter relation $s_i^2 = 1$ is not needed.)

A product of simple transpositions can be visualized in terms of a \emph{wiring diagram} consisting of $n$ wires passing from left to right, where an occurrence of $s_i$ indicates that the $i$th and $(i+1)$st wires from the top should switch places. A reduced word is one whose wiring diagram has no two wires crossing more than once.

For any reduced expression $w=s_{i_1}s_{i_2} \cdots s_{i_\ell}$, we can associate to each $s_{i_m}$ a pair $(\alpha_m, \beta_m)$ by 
\[s_{i_m} \cdot s_{i_{m+1}} \cdots s_{i_\ell} = s_{i_{m+1}} \cdots s_{i_\ell} \cdot s_{\alpha_m\beta_m},\tag{$*$}\]
with $\alpha_m < \beta_m$. Explicitly,
\begin{align*}
\alpha_m &= s_{i_\ell}s_{i_{\ell-1}} \cdots s_{i_{m+1}}(i_m),\\
\beta_m &= s_{i_\ell}s_{i_{\ell-1}} \cdots s_{i_{m+1}}(i_{m}+1).
\end{align*}
In terms of the wiring diagram, number the wires $1, \dots, n$ on the right. At the $m$th crossing (counting from the left), the two wires that cross are $\alpha_m$ and $\beta_m$. From this description, it is easy to see that the pairs $(\alpha_m, \beta_m)$ are all distinct, ranging over all pairs $\alpha < \beta$ such that $w(\alpha) > w(\beta)$.

\begin{ex}
The values of $\alpha_m$ and $\beta_m$ for  $w = s_1s_2s_3s_1s_2s_1$ are shown below.
\[
\begin{tikzpicture}[scale=.7]
\draw[blue] (0,4) to[out = 0, in=180] (1,3) to[out = 0, in=180] (2,2) to[out = 0, in=180] (3,1)--(6,1);
\draw[green!50!black] (0,3) to[out = 0, in=180] (1,4) -- (3,4) to[out = 0, in=180] (4,3) to[out = 0, in=180](5,2)--(6,2);
\draw[orange] (0,2) --(1,2) to[out = 0, in=180] (2,3)--(3,3) to [out = 0, in=180] (4,4)--(5,4) to[out = 0, in=180] (6,3);
\draw[red] (0,1)--(2,1) to[out = 0, in=180] (3,2)--(4,2) to[out = 0, in=180] (5,3) to[out = 0, in=180] (6,4);
\node at (6.5,4) {$1$};
\node at (6.5,3) {$2$};
\node at (6.5,2) {$3$};
\node at (6.5,1) {$4$};
\node at (.5,0) {$34$};
\node at (1.5,0) {$24$};
\node at (2.5,0) {$14$};
\node at (3.5,0) {$23$};
\node at (4.5,0) {$13$};
\node at (5.5,0) {$12$};
\end{tikzpicture}
\]
\end{ex}


We may use $\alpha_m$ and $\beta_m$ to compute the change in $w$ upon removing some transpositions from its reduced expression. 

\begin{prop} \label{prop:factor}
Let $w = s_{i_1}s_{i_2} \cdots s_{i_\ell}$ be a reduced expression with $\alpha_j$ and $\beta_j$ defined as above. Let $J \subset [\ell]$ be any subset. Then 
\[v = \prod_{j \in J} s_{i_j} = w \cdot \prod_{j \notin J} s_{\alpha_j \beta_j}.\]
\end{prop}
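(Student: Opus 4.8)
The plan is to induct on the length $\ell$, stripping off the leftmost letter. Write $w = s_{i_1}w'$ with $w' = s_{i_2}\cdots s_{i_\ell}$; this is again a reduced expression, since a suffix of a reduced word is reduced, so it has length $\ell-1$. The key observation is that for $2\le j\le\ell$ the relation~$(*)$ defining $(\alpha_j,\beta_j)$ involves only the letters $s_{i_j},\dots,s_{i_\ell}$, so the pair $(\alpha_j,\beta_j)$ associated to the $j$th crossing is the same whether we compute it for $w$ or for $w'$. Hence I can apply the inductive hypothesis to $w'$ together with a suitable subset of $\{2,\dots,\ell\}$. Throughout I read both $\prod_{j\in J}s_{i_j}$ and $\prod_{j\notin J}s_{\alpha_j\beta_j}$ in increasing order of the index $j$, so that when $1\notin J$ the factor $s_{\alpha_1\beta_1}$ occupies the leftmost slot of the latter product.

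Next I would split on whether $1\in J$. If $1\in J$, set $J'=J\setminus\{1\}\subseteq\{2,\dots,\ell\}$; the inductive hypothesis gives $\prod_{j\in J'}s_{i_j}=w'\prod_{j\notin J,\,j\ge2}s_{\alpha_j\beta_j}$, and left-multiplying by $s_{i_1}$ yields $\prod_{j\in J}s_{i_j}=w\prod_{j\notin J,\,j\ge2}s_{\alpha_j\beta_j}$, which is exactly the claim, since $1\in J$ means the complement of $J$ in $[\ell]$ is precisely $\{\,j\notin J:j\ge2\,\}$. If $1\notin J$, then $J\subseteq\{2,\dots,\ell\}$ and $v=\prod_{j\in J}s_{i_j}$ is literally the product attached to $w'$ and $J$, so the inductive hypothesis gives $v=w'\prod_{j\notin J,\,j\ge2}s_{\alpha_j\beta_j}$; since the target right-hand side is $w\,s_{\alpha_1\beta_1}\prod_{j\notin J,\,j\ge2}s_{\alpha_j\beta_j}$, everything reduces to the single group identity $w'=w\,s_{\alpha_1\beta_1}$, equivalently $s_{i_1}w'=w's_{\alpha_1\beta_1}$ --- and this is precisely relation~$(*)$ with $m=1$.

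I do not anticipate a genuine obstacle: all the mathematical content sits in relation~$(*)$, and the induction goes through because the pairs $(\alpha_j,\beta_j)$ are stable under prepending a letter. The only thing requiring care is bookkeeping --- keeping the two products in a consistent order and noting that the new factor $s_{\alpha_1\beta_1}$ carries the smallest index and so appears at the left end. The base case $\ell=0$ (where $J$ is empty and both sides are the identity) is trivial. As an alternative presentation that sidesteps the case split, one can prove by downward induction on $m$ the partial identity $\prod_{j\in J,\,j\ge m}s_{i_j}=\bigl(s_{i_m}\cdots s_{i_\ell}\bigr)\prod_{j\notin J,\,j\ge m}s_{\alpha_j\beta_j}$, the inductive step from $m+1$ to $m$ being a one-line application of~$(*)$ (distinguishing $m\in J$ from $m\notin J$); setting $m=1$ recovers the proposition.
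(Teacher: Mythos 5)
Your proof is correct and essentially the same as the paper's, which simply observes that by $(*)$, right-multiplying $w$ by $s_{\alpha_j\beta_j}$ deletes the $j$th letter, and then iterates over $j\notin J$ in increasing order. Your version packages the same mechanism as an induction that peels the leftmost letter and splits on whether $1\in J$ (your "alternative presentation" by downward induction on $m$ is even closer in spirit to the paper's iteration); the underlying content --- repeated application of $(*)$, together with the observation that $(\alpha_j,\beta_j)$ depends only on the suffix $s_{i_{j+1}}\cdots s_{i_\ell}$ --- is identical.
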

(Here and elsewhere, products such as $\prod_j s_{i_j}$ are taken from left to right in increasing order of index $j$.)
\begin{proof}
By equation $(*)$, $w s_{\alpha_j\beta_j} = s_{i_1}s_{i_2} \cdots \widehat{s_{i_j}}\cdots s_{i_\ell}$. Iterating over all $j \notin J$ (in increasing order) gives the result.
\end{proof}

The \emph{(strong) Bruhat order} on $S_n$ is defined such that if $w = s_{i_1}s_{i_2} \cdots s_{i_\ell}$ is a reduced expression, then $v < w$ in Bruhat order if $v = \prod_{j \in J} s_{i_j}$ for some subset $J \subset [\ell]$. (This definition does not depend on the choice of reduced expression for $w$.) The cover relations in Bruhat order are given by $v \lessdot v s_{ab}$, where $\ell(vs_{ab}) = \ell(v)+1$.

For more information on the symmetric group, see for instance \cite{BjornerBrenti}.

\subsection{Divided difference operators} \label{sec:dd}

The symmetric group $S_n$ acts on the polynomial ring $\CC[x_1, \dots, x_n]$ by permuting the variables: $(wf)(x_1, \dots, x_n) = f(x_{w(1)}, \dots, x_{w(n)})$. For any $1 \leq i < j \leq n$, we define the \emph{divided difference operator}
\[\partial_{ij} = \frac{1-s_{ij}}{x_i-x_j}.\]
If $j=i+1$, then we write $\partial_i = \partial_{i,i+1}$ for the \emph{simple divided difference operators}. 

It is straightforward to verify the following proposition.
\begin{prop} \label{prop:relations}
The divided difference operators satisfy the following relations for distinct $i$, $j$, $k$, and $l$:
\begin{align}
\label{negative} \del_{ij} &= -\del_{ji},\\
\label{nil} \del_{ij}^2 &= 0,\\
\label{nonadjacent} \del_{ij}\del_{kl} &= \del_{kl}\del_{ij},\\
\label{triangle} \del_{ij} \del_{jk} &= \del_{ik}\del_{ij} + \del_{jk}\del_{ik},\\
\label{braid} \del_{ij}\del_{jk}\del_{ij} &= \del_{jk}\del_{ij}\del_{jk},\\
\label{commute} \del_{ij}w &= w \del_{w^{-1}(i)w^{-1}(j)} &&\text{ for all } w \in S_n,\\
\label{leibniz} \del_{ij}(PQ) &= \del_{ij}(P) \cdot Q + s_{ij}(P) \cdot \del_{ij}(Q) &&\text{ for all } P,Q \in \CC[x_1, \dots, x_n].
\end{align}
\end{prop}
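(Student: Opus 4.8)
The plan is to verify each identity directly from the definition $\del_{ij} = (1-s_{ij})/(x_i-x_j)$, treating the operators as acting on $\CC[x_1,\dots,x_n]$. Identities \eqref{negative} and \eqref{commute} are immediate: for \eqref{negative} we note $x_j - x_i = -(x_i-x_j)$ while $1-s_{ji} = 1-s_{ij}$; for \eqref{commute} we compute $\del_{ij}w = \frac{(1-s_{ij})w}{x_i-x_j}$ and use that $s_{ij}w = w s_{w^{-1}(i)w^{-1}(j)}$ together with $w\cdot(x_i-x_j)^{-1} = (x_{w^{-1}(i)}-x_{w^{-1}(j)})^{-1}\cdot w$ when acting on polynomials from the left. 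For \eqref{nil}, first check that $\del_{ij}(f)$ is symmetric in $x_i,x_j$ (it is antisymmetric divided by an antisymmetric polynomial), so applying $1-s_{ij}$ again kills it; alternatively expand $\del_{ij}^2 = \frac{(1-s_{ij})}{x_i-x_j}\cdot\frac{(1-s_{ij})}{x_i-x_j}$ and use $s_{ij}\cdot\frac{1}{x_i-x_j} = \frac{1}{x_j-x_i}\cdot s_{ij} = -\frac{1}{x_i-x_j}s_{ij}$ to get $\frac{1}{x_i-x_j}\bigl(1 - s_{ij} + s_{ij} - s_{ij}^2\bigr)\frac{1}{x_i-x_j}$; since $s_{ij}^2=1$ this telescopes to $0$. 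The Leibniz rule \eqref{leibniz} follows by writing $(1-s_{ij})(PQ) = PQ - s_{ij}(P)s_{ij}(Q) = (P - s_{ij}(P))Q + s_{ij}(P)(Q - s_{ij}(Q))$ and dividing by $x_i-x_j$.

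For the relations among several operators, \eqref{nonadjacent} is clear since $s_{ij}$ and $s_{kl}$ commute and each fixes the other's variables, so the rational-function coefficients and the permutations all commute. The substantive identities are the ``triangle'' relation \eqref{triangle} and the braid relation \eqref{braid}. I expect \eqref{triangle} to be the main obstacle, since unlike the others it is not a formal consequence of commuting pieces but a genuine three-term identity. The cleanest route is to expand both sides as operators of the form $\sum_\sigma c_\sigma(x)\,\sigma$ where $\sigma$ ranges over the subgroup $\langle s_{ij}, s_{jk}\rangle \cong S_3$ acting on $\{x_i,x_j,x_k\}$, and check that the coefficient of each group element agrees. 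Concretely, $\del_{ij}\del_{jk}$ produces a combination of $1, s_{ij}, s_{jk}, s_{ij}s_{jk}$ with explicit rational coefficients, and likewise for $\del_{ik}\del_{ij} + \del_{jk}\del_{ik}$; matching the four (or six) coefficients reduces to elementary partial-fraction identities in $x_i,x_j,x_k$ such as $\frac{1}{(x_i-x_j)(x_j-x_k)} = \frac{1}{(x_i-x_k)}\bigl(\frac{1}{x_i-x_j} + \frac{1}{x_j-x_k}\bigr)$ — wait, the correct identity is $\frac{1}{(x_i-x_j)(x_j-x_k)} + \frac{1}{(x_j-x_k)(x_i-x_k)}$-type; in any case each is a one-line check after clearing denominators.

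Finally, the braid relation \eqref{braid} can either be verified by the same brute-force expansion in the group algebra of $\langle s_{ij},s_{jk}\rangle$, or derived formally from \eqref{triangle}, \eqref{nil}, and \eqref{negative}: starting from $\del_{ij}\del_{jk}\del_{ij}$, substitute $\del_{jk}\del_{ij} = \del_{ij}\del_{ik} + \del_{ik}\del_{jk}$ (a rearrangement of \eqref{triangle} after relabeling), then use $\del_{ij}^2 = 0$ to kill one term, and simplify the remainder using \eqref{triangle} once more to land on $\del_{jk}\del_{ij}\del_{jk}$. I would present \eqref{triangle} with the full partial-fraction computation and then remark that \eqref{braid} follows formally, since reproving the braid relation from scratch is redundant once \eqref{triangle} is in hand. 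The only care needed throughout is to remember that these are operator identities, so when a permutation $\sigma$ moves past a rational coefficient $c(x)$ it becomes $\sigma(c)(x)\cdot\sigma$; keeping track of this bookkeeping is the only place an error is likely to creep in.
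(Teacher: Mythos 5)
Your proposal is correct, and it matches the paper's (implicit) approach: the paper offers no proof beyond asserting the relations are ``straightforward to verify,'' and your direct verification from the definition---coefficient-matching in $\sum_\sigma c_\sigma(x)\,\sigma$ form for \eqref{triangle}, with \eqref{braid} deduced formally from \eqref{triangle}, \eqref{nil}, and \eqref{negative}---is exactly that routine check carried out (and your first partial-fraction identity was in fact the right one). The only detail to fix is the bookkeeping in \eqref{commute}: with the paper's convention $w(x_i)=x_{w(i)}$, the operator identity you need is $\frac{1}{x_i-x_j}\cdot w = w\cdot\frac{1}{x_{w^{-1}(i)}-x_{w^{-1}(j)}}$ (moving the multiplication operator from the left of $w$ to its right), which together with $s_{ij}w = w\,s_{w^{-1}(i)w^{-1}(j)}$ gives the relation as stated.
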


If $w$ has reduced expression $s_{i_1}s_{i_2} \cdots s_{i_{\ell(w)}}$, then we define $\partial_w = \partial_{i_1}\partial_{i_2} \cdots \partial_{i_{\ell(w)}}$. This does not depend on the choice of reduced expression since the $\del_i$ satisfy the \emph{nil-Coxeter relations} \eqref{nil}, \eqref{nonadjacent}, and \eqref{braid}.

In \cite{Macdonald}, Macdonald defines the \emph{skew divided difference operators} $\del_{w/v}$ for $v,w \in S_n$ such that, for any $P,Q \in \CC[x_1, \dots, x_n]$,
\[\del_w(PQ) = \sum_{v \in S_n} v(\del_{w/v}P) \cdot \del_vQ.\]
One can compute $\del_{w/v}$ by applying the Leibniz rule \eqref{leibniz} repeatedly for $\del_w = \del_{i_1}\del_{i_2} \cdots \del_{i_\ell}$ and then using relation \eqref{commute} to move all of the elements of $S_n$ to the left.

Explicitly, for any subset $J \subset [\ell]$, let $\varphi_J = \prod_{j=1}^\ell \varphi_j(J)$, where $\varphi_j(J) = s_{i_j}$ if $j \in J$ and $\del_{i_j}$ if $j \notin J$. Then
\[\del_{w/v} = v^{-1} \sum_J \varphi_J,\]
where $J$ ranges over all subsets of $[\ell]$ for which $\prod_{j \in J} s_{i_j}$ is a reduced expression for $v$. The value of $\del_{w/v}$ will not depend on the choice of reduced word for $w$. Clearly $\del_{w/v}=0$ unless $v<w$ in Bruhat order.

\begin{ex} \label{ex:12321a}
	Let $w = s_1s_2s_3s_2s_1$ and $v=s_1$. Then we may take either $J=\{1\}$ or $\{5\}$, which gives
	\begin{align*}
	\del_{w/v} &= s_1^{-1} (s_1 \del_2\del_3\del_2\del_1 + \del_1\del_2\del_3\del_2s_1)\\
	&=\del_{23}\del_{34}\del_{23}\del_{12} + \del_{21}\del_{13}\del_{34}\del_{13}\\
	&=\del_{23}\del_{34}\del_{23}\del_{12} - \del_{12}\del_{13}\del_{34}\del_{13}.
	\end{align*}
\end{ex}

The previous example shows that the naive expansion of $\del_{w/v}$ when expressed in terms of $\del_{ij}$ for $i<j$ may contain negative coefficients. However, it was proved in \cite{Liudd} that one can always rewrite it in a form that has only positive coefficients. (See Example~\ref{ex:12321pos} below for how to do this for the case in Example~\ref{ex:12321a}.) We will generalize this positivity result in Theorem~\ref{thm:tdel} below.

\subsection{Schubert polynomials}

For any permutation $w \in S_n$, the \emph{Schubert polynomial} $\schub_w$ is defined by 
\[\schub_w = \del_{w^{-1}w_0} (x_1^{n-1} x_2^{n-2} \cdots x_{n-1}).\]
In other words,
$\schub_{w_0} = x_1^{n-1}x_2^{n-2} \cdots x_{n-1}$, and $\schub_{ws_i} = \del_i \schub_w$ if $\ell(ws_i) = \ell(w)-1$.

Schubert polynomials exhibit a \emph{stability} property in that $\schub_w$ is unchanged under the embedding $S_n \to S_{n+1}$ in which $S_n$ acts on the first $n$ letters of $[n+1]$. Hence it is often natural to instead define Schubert polynomials $\schub_w$ for $w \in S_\infty$, that is, when $w$ is a permutation of the positive integers that fixes all but finitely many elements. In this context, the Schubert polynomials $\schub_w$ for $w \in S_\infty$ form a basis for the polynomial ring $\CC[x_1, x_2, \dots]$.

It is well known that the expansion of a Schubert polynomial in the monomial basis always has nonnegative coefficients. One common description is as follows. A \emph{pipe dream} or \emph{rc-graph} is a type of wiring diagram in which each box $(i,j)$ with $i,j \geq 1$ is filled with either a cross or a pair of elbows. Such a diagram corresponds to a permutation $w$ if the wire entering in row $i$ exits in column $w(i)$. A pipe dream is called \emph{reduced} if no two pipes cross more than once. The weight of a pipe dream is $x_{i_1}x_{i_2} \cdots$, where $i_1$, $i_2$, \dots, are the rows containing the crosses of the pipe dream. Then $\schub_w$ is the sum of the weights of all reduced pipe dreams corresponding to $w$. See \cite{BergeronBilley, BilleyJockuschStanley, FominStanley} for more details.

\begin{example} \label{ex:pipe}
	The following are the two reduced pipe dreams for the permutation $w=2431$. Hence $\schub_w = x_1^2x_2x_3 + x_1x_2^2x_3$.
	\[
	\begin{tikzpicture}[scale=.7]
	\draw[black!50] (0,0) grid (3,-1) (0,-1) grid (2,-2) (0,-2) grid (1,-3) (3,0)--(4,0) (0,-3)--(0,-4);
	\draw[red,thick] (.5,0)--(.5,-3) to[out = -90, in=0] (0,-3.5);
	\draw[orange,thick] (1.5,0) to[out=-90,in=0] (1,-.5)--(0,-.5);
	\draw[green!50!black,thick] (2.5,0)--(2.5,-1) to[out=-90,in=0] (2,-1.5) to[out=180,in=90](1.5,-2) to [out=-90,in=0] (1,-2.5) to (0,-2.5);
	\draw[blue,thick] (3.5,0) to [out=-90,in=0] (3,-.5)--(2,-.5) to [out=180, in=90] (1.5,-1) to[out=-90,in=0] (1,-1.5)--(0,-1.5);
	\end{tikzpicture}
	\qquad
	\begin{tikzpicture}[scale=.7]
	\draw[black!50] (0,0) grid (3,-1) (0,-1) grid (2,-2) (0,-2) grid (1,-3) (3,0)--(4,0) (0,-3)--(0,-4);
	\draw[red,thick] (.5,0)--(.5,-3) to[out = -90, in=0] (0,-3.5);
	\draw[orange,thick] (1.5,0) to[out=-90,in=0] (1,-.5)--(0,-.5);
	\draw[green!50!black,thick] (2.5,0) to[out=-90,in=0] (2,-.5) to [out=180, in=90] (1.5,-1)--(1.5,-2) to [out=-90,in=0] (1,-2.5) to (0,-2.5);
	\draw[blue,thick] (3.5,0) to [out=-90,in=0] (3,-.5) to[out=180,in=90] (2.5,-1)to[out=-90,in=0] (2,-1.5)--(0,-1.5);
	\end{tikzpicture}
	\]
	
\end{example}

The Schubert polynomials arise in the study of the flag variety $\mathscr F_n = GL_n(\CC)/B$ (where $B$ is the subgroup of upper triangular matrices). Specifically, they are polynomial representatives of the classes of Schubert varieties in the cohomology ring $H^*(\mathscr F_n) = H^*(\mathscr F_n; \CC) = \CC[x_1, \dots, x_n]/I$, where $I$ is the ideal generated by symmetric polynomials in $x_1, \dots, x_n$ with no constant term. Since multiplication in $H^*(\mathscr F_n)$ corresponds to intersection of Schubert varieties, one can deduce that in the expansion
\[\schub_u \cdot \schub_v = \sum_w c_{uv}^w \schub_w,\]
the \emph{generalized Littlewood-Richardson coefficients} (or \emph{Schubert structure constants}) $c_{uv}^w$ are always nonnegative integers. It is an important open problem to give a combinatorial description of the coefficients $c_{uv}^w$---see \cite{Buch, Coskun} for some partial progress.

For instance, in the special case that $\schub_u$ is an elementary or complete homogeneous symmetric polynomial, we have the following \emph{Pieri rule} for Schubert polynomials (see, for instance, \cite{LascouxSchutzenberger, Postnikov, Sottile}).
\begin{theorem}\label{thm:pieri}
	Let $v \in S_n$, and let $e_m^{(k)}$ be the $m$th elementary symmetric polynomial in $x_1, \dots, x_k$. Then
	\[\schub_v \cdot e_m^{(k)} = \sum_w \schub_w,\]
	where $w \in S_n$ ranges over all permutations such that there exists a sequence
	\[v \lessdot v s_{a_1b_1} \lessdot v s_{a_1b_1}s_{a_2b_2} \lessdot \cdots \lessdot v s_{a_1b_1}s_{a_2b_2} \cdots s_{a_tb_t} = w,\]
	where $a_i \leq k < b_i$ for all $i$, and the $a_i$ are distinct.
	
	Similarly, if one replaces $e_m^{(k)}$ with $h_m^{(k)}$, the $m$th complete homogeneous symmetric polynomial in $x_1, \dots, x_k$, then the same result holds except that instead the $b_i$ are distinct.
\end{theorem}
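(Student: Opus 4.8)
The plan is to pin down the structure constants $c^w$ in $\schub_v \cdot e_m^{(k)} = \sum_w c^w \schub_w$ by extracting them with a divided difference operator, rewriting the result through the skew Leibniz rule, and then feeding in the positivity of $\del_{w/v}$ together with its action on $e_m^{(k)}$ and $h_m^{(k)}$. First, I recall the standard fact that for any $f = \sum_z a_z\schub_z$ the coefficient $a_z$ equals the constant term of $\del_z f$: since $\del_z\schub_{z'} = \schub_{z'z^{-1}}$ when $\ell(z'z^{-1}) = \ell(z') - \ell(z)$ and $\del_z\schub_{z'} = 0$ otherwise, the only term with a nonzero constant term is $\del_z\schub_z = \schub_e = 1$. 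Applied to $f = \schub_v\cdot e_m^{(k)}$, this says $c^w$ is the constant term of $\del_w(\schub_v\cdot e_m^{(k)})$.

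Next I would reduce this to evaluating a skew operator. By the skew Leibniz rule,
\[\del_w\bigl(e_m^{(k)}\cdot\schub_v\bigr) = \sum_{u\in S_n} u\bigl(\del_{w/u}e_m^{(k)}\bigr)\cdot\del_u\schub_v.\]
Since $\schub_v\cdot e_m^{(k)}$ is homogeneous of degree $\ell(v)+m$, I may assume $\ell(w) = \ell(v)+m$ (otherwise $c^w = 0$). As $\del_{w/u}$ is homogeneous of degree $\ell(u)-\ell(w)$, the factor $\del_{w/u}e_m^{(k)}$ has degree $\ell(u)-\ell(v)$ and hence vanishes for $\ell(u)<\ell(v)$; meanwhile, among $u$ with $\ell(u)\ge\ell(v)$, the factor $\del_u\schub_v$ is $0$ unless $u = v$, in which case it is $1$. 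Thus only $u=v$ contributes to the constant term, and, since applying the permutation $v$ to a polynomial preserves its constant term, $c^w$ equals $\del_{w/v}(e_m^{(k)})$, a polynomial of degree $0$, i.e.\ a number.

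To compute that number I would invoke Theorem~\ref{thm:tdel}: $\del_{w/v}$ can be written as a positive combination of products $\del_{a_1b_1}\cdots\del_{a_mb_m}$ with $a_l<b_l$, whose index sequences record saturated chains $v = v_0 \lessdot v_1 \lessdot \cdots \lessdot v_m = w$ with $v_l = v_{l-1}s_{a_lb_l}$. Using the computation of \S3 of the action of $\del_{ij}$ on $e_m^{(k)}$ — namely $\del_{ij}e_m^{(k)}$ is $e_{m-1}$ in the variables $x_1,\dots,x_k$ with $x_i$ deleted when $i\le k<j$, and $0$ otherwise — one iterates to get that $\del_{a_1b_1}\cdots\del_{a_mb_m}(e_m^{(k)})$ equals $1$ when every $a_l\le k<b_l$ and the $a_l$ are distinct, and $0$ in every other case (so $c^w$ is a nonnegative integer). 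Therefore $c^w$ counts, weighted by the coefficients of Theorem~\ref{thm:tdel}, exactly the chains appearing in the statement, which is the Pieri rule. The $h_m^{(k)}$ case is identical, using instead $\del_{ij}h_m^{(k)} = h_{m-1}(x_1,\dots,x_k,x_j)$ for $i\le k<j$ (and $0$ otherwise); this forces the $b_l$ rather than the $a_l$ to be distinct.

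The step needing the most care is the last one: I must see that the positive expansion of $\del_{w/v}$ in Theorem~\ref{thm:tdel} assigns total weight exactly $1$ to the family of admissible chains for a given $w$ — equivalently, that $\del_{w/v}(e_m^{(k)})\in\{0,1\}$, being $1$ precisely when a chain as in the statement exists. This is where the explicit form of the positive expansion (not merely its positivity) is essential, and where I would expect the combinatorial bookkeeping to be delicate; everything preceding it is formal manipulation together with the degree count.
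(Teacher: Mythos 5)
Your overall plan matches the paper's: Proposition~\ref{prop:cuvw} gives $c^w=\del_{w/v}e_m^{(k)}$, and the rest is to evaluate that number. Your derivation of $c^w=\del_{w/v}e_m^{(k)}$ via the skew Leibniz rule and degree counting is correct and is exactly how Proposition~\ref{prop:cuvw} is proved in the paper.

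The gap, which you flag yourself but leave unfilled, is the whole combinatorial content. Two points. First, the positive expansion in Theorem~\ref{thm:tdel} is not a priori a sum over saturated chains from $v$ to $w$; the terms are indexed by subwords $J$ of a chosen reduced word for $w_0v$ (with $\prod_{j\in J}s_{i_j}=w_0w$), and the complementary factors $\prod_{j\notin J}\del_{\alpha_j\beta_j}$ come in a fixed order dictated by that reduced word. It is not automatic that all contributing terms correspond to length-additive chains in Bruhat order, nor that the admissible $a_l$'s are distinct --- deducing this requires picking the reduced word for $w_0v$ adapted to the parabolic $S_k\times S_{n-k}$ (so that $\del_{\alpha_q\beta_q}e_m^{(k)}=0$ for $q$ in the $S_k\times S_{n-k}$ part), as in the proof of Theorem~\ref{thm:tdele}. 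Second, the multiplicity-freeness you need, that at most one term survives, is exactly Proposition~\ref{prop:chain} (uniqueness up to commuting transpositions of the factorization of $v^{-1}w$ into $s_{a_ib_i}$ with $a_i\le k<b_i$ and the $a_i$ distinct), combined with the observation that the order in which the surviving $\del_{\alpha_j\beta_j}$ appear is forced by the reduced word, so they give a unique $J$. Without this, ``total weight exactly $1$'' remains unproved, and it is the crux of the theorem.

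One more structural remark: the paper deliberately works with $\tdel_{w/v}$ rather than $\del_{w/v}$, because Theorem~\ref{thm:tdele} gives the whole polynomial $\tdel_{w/v}e_m^{(k)}=e_{m-|A|}([k]\setminus A)$ (or $0$), from which the constant term and the condition $|A|=m$ drop out immediately. If you insist on $\del_{w/v}$ alone you can still finish (it is the top-degree part of $\tdel_{w/v}$), but you then have to track degrees through the subword expansion by hand. Either way, you would have to reproduce the content of Proposition~\ref{prop:chain} and the argument in the proof of Theorem~\ref{thm:tdele}; citing ``positivity plus bookkeeping'' does not yet constitute a proof.
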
 

We will deduce this Pieri rule using the action of skew divided difference operators in Corollary~\ref{cor:pieri} below.

\medskip

The definition of Schubert polynomials easily implies that for $v,w \in S_n$,
\[
\del_v \schub_w = \begin{cases}
\schub_{wv^{-1}}&\text{if } \ell(wv^{-1}) = \ell(w) - \ell(v),\\
0&\text{otherwise.}
\end{cases}
\]
In particular, if $\ell(v) = \ell(w)$, then $\del_v \schub_w = 1$ if $v=w$, and $0$ otherwise. Using this, one can deduce the following result from Macdonald \cite{Macdonald}.
\begin{prop}[\cite{Macdonald}] \label{prop:cuvw}
	Let $u,v,w \in S_n$ such that $\ell(u) + \ell(v) = \ell(w)$. Then $\del_{w/v} \schub_u = c_{uv}^w$.
\end{prop}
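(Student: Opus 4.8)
The plan is to extract the coefficient $c_{uv}^w$ from the product $\schub_u\cdot\schub_v$ by hitting it with the divided difference operator $\del_w$, and then to identify the outcome with $\del_{w/v}\schub_u$ via Macdonald's product rule for the skew operators. First I would start from the expansion $\schub_u\schub_v = \sum_{w'} c_{uv}^{w'}\schub_{w'}$ and note that, since $\schub_u\schub_v$ is homogeneous of degree $\ell(u)+\ell(v) = \ell(w)$, only permutations $w'$ with $\ell(w') = \ell(w)$ occur. Applying $\del_w$ and using the fact recorded just before the statement --- that $\del_w\schub_{w'}$ equals $1$ if $w' = w$ and $0$ for every other $w'$ of the same length --- gives $\del_w(\schub_u\schub_v) = c_{uv}^w$.

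Next I would compute $\del_w(\schub_u\schub_v)$ the other way, via the defining identity $\del_w(PQ) = \sum_{v'\in S_n} v'(\del_{w/v'}P)\cdot \del_{v'}Q$ with $P = \schub_u$ and $Q = \schub_v$, and show that all but one term vanishes for degree reasons. From the explicit description $\del_{w/v'} = (v')^{-1}\sum_J \varphi_J$ with $|J| = \ell(v')$, the operator $\del_{w/v'}$ is homogeneous of degree $-(\ell(w)-\ell(v'))$, so $\del_{w/v'}\schub_u = 0$ unless $\ell(v')\ge \ell(w)-\ell(u) = \ell(v)$; on the other hand $\del_{v'}\schub_v = 0$ unless $\ell(v')\le\ell(v)$, and when $\ell(v') = \ell(v)$ it is nonzero only for $v' = v$. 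Hence only the term $v' = v$ survives, where $\del_{w/v}\schub_u$ is homogeneous of degree $\ell(u)-(\ell(w)-\ell(v)) = 0$, hence a scalar, which is fixed by $v$, and $\del_v\schub_v = 1$. Thus $\del_w(\schub_u\schub_v) = \del_{w/v}\schub_u$, and comparing with the first computation yields $\del_{w/v}\schub_u = c_{uv}^w$.

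I do not anticipate a real obstacle: the argument is essentially bookkeeping (following Macdonald) once the two ingredients are in hand --- the action of $\del_{v'}$ on Schubert polynomials and the defining property of $\del_{w/v'}$ --- both of which are available in the excerpt. The one place demanding care is the degree count that collapses the sum over $v'$ to the single index $v' = v$; this is exactly where the hypothesis $\ell(u)+\ell(v) = \ell(w)$ is used essentially, since it is what forces the surviving term $\del_{w/v}\schub_u$ to be a degree-$0$ polynomial and hence a constant. A minor point worth stating explicitly is that $c_{uv}^{w'} = 0$ unless $\ell(w') = \ell(u)+\ell(v)$, which is immediate from homogeneity of Schubert polynomials and is what licenses restricting the expansion of $\schub_u\schub_v$ to permutations of length $\ell(w)$ before applying $\del_w$.
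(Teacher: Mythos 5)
Your proposal is correct and follows essentially the same approach as the paper: extract $c_{uv}^w$ as $\del_w(\schub_u\schub_v)$ and then apply the defining identity for skew divided difference operators, collapsing the sum to a single term by degree considerations. You have filled in the degree bookkeeping in somewhat more detail than the paper does (which simply says ``by degree considerations''), but the underlying argument is identical.
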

\begin{proof}
	By the discussion above, the coefficient $c_{uv}^w$ is equal to
	\[\del_w(\schub_u\schub_v) = \sum_{v'} v'(\del_{w/v'}\schub_u) \cdot \del_{v'}\schub_v.\]
	By degree considerations, the only nonzero terms can arise when $\ell(v) = \ell(v')$, in which case we must have $v = v'$. Thus the right hand side simplifies to $\del_{w/v} \schub_u$, as desired.
\end{proof}

For more information regarding skew divided difference operators and Schubert polynomials, see \cite{Kirillov, Liudd, Macdonald}.

\medskip

One can also define the \emph{double Schubert polynomials} in two sets of variables $x_1, \dots, x_n$ and $y_1, \dots, y_n$ by
\[\schub_w(x, y) = \del_{w^{-1}w_0}\prod_{i+j \leq n} (x_i-y_j),\]
where the divided difference operators act on the $x$-variables but not the $y$-variables.

Just as $\schub_w$ is monomial positive, $\schub_w(x,y)$ is a polynomial in $x_i-y_j$ with positive coefficients. In fact, the combinatorial description of $\schub_w$ in terms of pipe dreams extends to $\schub_w(x,y)$ by weighting a cross in row $i$ and column $j$ by $x_i-y_j$.

The double Schubert polynomials represent the classes of Schubert varieties in the equivariant cohomology ring $H^*_{T}(\mathscr F_n)$, where $T = (\CC^*)^n$ is the $n$-dimensional torus.
According to GKM theory \cite{GKM}, there is an injective map \[H_T^*(\mathscr F_n) \to \bigoplus_{w \in S_n} H^*_T(e_w) = \bigoplus_{w \in S_n} \CC[y_1, \dots, y_n],\]
where $e_w$ is the $T$-fixed point corresponding to $w \in S_n$. We define the \emph{localization} of $\schub_v$ at $w$ to be the specialization
\[\schub_v(wy,y) = \schub_v(y_{w(1)}, \dots, y_{w(n)}; y_1, \dots, y_n).\]
Then the localization of $\schub_v$ at $w$ is the image of the Schubert class in $H^*_T(e_w)$. 

The following formula (sometimes called Billey's formula \cite{Billey}) gives a combinatorial expression for these localizations.

\begin{thm}[\cite{Billey}] \label{thm:billey}
Let $w^{-1} = s_{i_1}s_{i_2} \cdots s_{i_\ell}$ be a reduced expression, and define $\alpha_j$ and $\beta_j$ as in $(*)$. Then
\[\schub_v(wy,y) = \sum_{J} \prod_{j \in J} (y_{\beta_j}-y_{\alpha_j}),\]
where $J$ ranges over all subsets of $[\ell]$ for which $\prod_{j \in J}s_{i_j}$ is a reduced word for $v^{-1}$.
\end{thm}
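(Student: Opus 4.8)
I would prove Theorem~\ref{thm:billey} by induction on $\ell = \ell(w)$, showing that both sides of the asserted identity satisfy the same one-step recursion. For the base case $w = e$ there are no crossings, so the right-hand side is the empty sum unless $v = e$, in which case it is the empty product $1$; thus the statement reduces to the identity $\schub_v(y,y) = \delta_{v,e}$, a standard fact about double Schubert polynomials that I would either cite or establish by a short separate argument.

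For the inductive step, put $u = ws_{i_1}$, so that $u^{-1} = s_{i_1}w^{-1} = s_{i_2}\cdots s_{i_\ell}$ is a reduced expression with $\ell(u) = \ell - 1$, and observe that the pairs $(\alpha_j,\beta_j)$ for $j \geq 2$ are unchanged whether computed from $w^{-1}$ or from this shorter word. The heart of the argument is the recursion
\[
\schub_v(wy,y) = \schub_v(uy,y) - (y_{u(i_1)} - y_{u(i_1+1)})\cdot \schub_{vs_{i_1}}(uy,y),
\]
in which the last term is present only when $\ell(vs_{i_1}) = \ell(v) - 1$. To obtain it, apply the identity $f - s_{i_1}f = (x_{i_1} - x_{i_1+1})\,\del_{i_1}f$ (with $s_{i_1}$ and $\del_{i_1}$ acting on the $x$-variables) to $f = \schub_v(x,y)$ and specialize $x_k \mapsto y_{u(k)}$; the $s_{i_1}f$ term becomes $\schub_v$ evaluated at $x = wy$ because $us_{i_1} = w$, while $\del_{i_1}\schub_v$ is $\schub_{vs_{i_1}}$ or $0$ according to the sign of $\ell(vs_{i_1}) - \ell(v)$.

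It remains to match the recursion with the claimed formula. I would apply the inductive hypothesis to $\schub_v(uy,y)$ and to $\schub_{vs_{i_1}}(uy,y)$, using in both cases the reduced word $s_{i_2}\cdots s_{i_\ell}$ of $u^{-1}$, for the permutations $v^{-1}$ and $(vs_{i_1})^{-1} = s_{i_1}v^{-1}$ respectively, and then split the subsets $J \subseteq [\ell]$ for which $\prod_{j\in J}s_{i_j}$ is a reduced word for $v^{-1}$ according to whether $1 \in J$. The subsets with $1 \notin J$ are exactly those producing $\schub_v(uy,y)$; the subsets $\{1\}\sqcup J'$ with $1 \in J$ correspond bijectively to the subsets $J'$ producing $\schub_{vs_{i_1}}(uy,y)$, using the elementary fact that $s_{i_1}\prod_{j\in J'}s_{i_j}$ is a reduced word for $v^{-1}$ precisely when $\ell(vs_{i_1}) = \ell(v) - 1$ and $\prod_{j\in J'}s_{i_j}$ is a reduced word for $s_{i_1}v^{-1}$. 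The additional factor contributed by $j = 1$ is $y_{\beta_1} - y_{\alpha_1}$, and since $\ell(w) = \ell(u) + 1$ forces $u(i_1) < u(i_1+1)$, the definition $(*)$ applied to $u = s_{i_\ell}\cdots s_{i_2}$ gives $\alpha_1 = u(i_1)$ and $\beta_1 = u(i_1+1)$, so that factor equals $-(y_{u(i_1)} - y_{u(i_1+1)})$, matching the recursion exactly.

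The step I expect to demand the most care is exactly this bookkeeping: keeping straight that it is the word for $u^{-1}$ (rather than $u$, $w$, or $w^{-1}$) that governs the induction, pinning down the sign of $y_{\beta_1} - y_{\alpha_1}$ from the length inequality, and checking that reducedness is preserved under prepending $s_{i_1}$. Deriving the recursion itself and handling the base case are routine once one is comfortable with the divided-difference definition of $\schub_v(x,y)$.
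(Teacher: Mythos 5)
The paper does not prove this theorem; it cites \cite{Billey} and later deduces it as the minimum-degree part of the more general Theorem~\ref{thm:local}. Your direct inductive proof is correct. The recursion you obtain by specializing $f - s_{i_1}f = (x_{i_1}-x_{i_1+1})\del_{i_1}f$ at $x_k = y_{u(k)}$ is the natural untwisted counterpart of Lemma~\ref{lem:recurrence}, and the split of $J$ according to whether $1 \in J$ parallels equation~\eqref{eq:wsi} in the paper's proof of Theorem~\ref{thm:local}. The delicate points you flag are all handled correctly: the pairs $(\alpha_j,\beta_j)$ for $j\geq 2$ computed from $w^{-1}=s_{i_1}\cdots s_{i_\ell}$ agree with those computed from $u^{-1}=s_{i_2}\cdots s_{i_\ell}$; since $\ell(w)=\ell(u)+1$ one has $u(i_1)<u(i_1+1)$, so $(\alpha_1,\beta_1)=(u(i_1),u(i_1+1))$ and $y_{\beta_1}-y_{\alpha_1}$ carries exactly the sign appearing in the recursion; and a subset $\{1\}\sqcup J'$ gives a reduced word for $v^{-1}$ precisely when $\ell(vs_{i_1})=\ell(v)-1$ and $J'$ gives a reduced word for $(vs_{i_1})^{-1}$, which matches exactly when the $\del_{i_1}\schub_v$ term is nonzero. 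The base case $\schub_v(y,y)=\delta_{v,e}$ is standard; one way to see it is a descending induction on $\ell(v)$ showing $\schub_v(wy,y)=0$ unless $v\leq w$, starting from $\schub_{w_0}(wy,y)=\prod_{i+j\leq n}(y_{w(i)}-y_j)$, which vanishes unless $w=w_0$.
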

Note that $\schub_v(wy,y)$ is a polynomial in $y_b-y_a$, $b>a$, with positive coefficients. We will give a generalization of this formula in Theorem~\ref{thm:local} below.

\section{Twisted operators}
In this section, we introduce a twisted version of divided difference operators and discuss some of their properties. In particular, we relate them to operators $\tdel_{w/v}$ which are closely related to skew divided difference opearators. We will prove that these $\tdel_{w/v}$ can be expressed positively in terms of $\del_{ij}$, $i<j$. We will then compute their action explicitly on elementary and complete homogeneous symmetric polynomials $e_m(x_1, \dots, x_k)$ and $h_m(x_1, \dots, x_k)$, which will imply the Pieri rule for Schubert polynomials.

\subsection{Definitions}
Define \emph{twisted operators}
\[\ts_i = s_i + \partial_i.\]
For any expression $w = s_{i_1} \cdots s_{i_{\ell}}$ (not necessarily reduced!), one can expand $T_w=T_{i_1} \cdots T_{i_\ell}$ to obtain an expression in terms of divided difference operators and the action of $S_n$. By moving all the elements of $S_n$ to the left using relation \eqref{commute}, we can write
\[T_w = \sum_{v \in S_n} v\tdel_{w/v}\]
for some operators $\tdel_{w/v}$ in the algebra generated by the $\del_{ij}$.

As in Section~\ref{sec:dd}, we can give an explicit formula for $\tdel_{w/v}$:
\[\tdel_{w/v} = v^{-1} \sum_J \varphi_J,\]
where $J$ ranges over all subsets of $[\ell]$ for which $\prod_{j \in J} s_{i_j} = v$, where this expression is not necessarily reduced. (Recall that $\varphi_J = \prod_{j=1}^{\ell} \varphi_j(J)$, where $\varphi_j(J) = s_{i_j}$ for $j \in J$ and $\del_{i_j}$ for $j \notin J$.) As we will see, $\tdel_{w/v}$ will not depend on the initial choice of expression for $w$.

Note that if we start with a reduced expression for $w$, then the only difference between the definitions of $\tdel_{w/v}$ and $\del_{w/v}$ is that $\prod_{j \in J}s_{i_j}$ must be a reduced expression for $v$ in $\del_{w/v}$ but not in $\tdel_{w/v}$. It follows that $\del_{w/v}$ is the maximum degree part of $\tdel_{w/v}$.


\begin{ex} \label{ex:12321}
	Let $w = s_1s_2s_3s_2s_1$ and $v=s_1$ as in Example~\ref{ex:12321a}. Then we may take either $J=\{1\}$, $\{5\}$, $\{1,2,4\}$, or $\{2,4,5\}$, which gives
	\begin{align*}
	\tdel_{w/v} &= s_1^{-1} (s_1 \del_2\del_3\del_2\del_1 + \del_1\del_2\del_3\del_2s_1 + s_1s_2 \del_3s_2\del_1 + \del_1s_2\del_3s_2s_1)\\
	&=\del_{23}\del_{34}\del_{23}\del_{12} + \del_{21}\del_{13}\del_{34}\del_{13} + \del_{24}\del_{12} + \del_{21}\del_{14}\\
	&= \del_{23}\del_{34}\del_{23}\del_{12} - \del_{12}\del_{13}\del_{34}\del_{13} + \del_{24}\del_{12} - \del_{12}\del_{14}.
	\end{align*}
\end{ex}

The operators $\tdel_{w/v}$ are well-defined due to the following proposition.


\begin{prop}
	The operators $T_i$ satisfy the Coxeter relations $T_i^2 = 1$, $T_iT_j = T_jT_i$ for $|i-j| > 1$, and $T_iT_{i+1}T_i = T_{i+1}T_iT_{i+1}$. The operators $T_w$ and $\tdel_{w/v}$ do not depend on the choice of (not necessarily reduced) expression $w = s_{i_1} \cdots s_{i_\ell}$.
\end{prop}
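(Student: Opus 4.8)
The plan is to verify the three Coxeter relations $T_i^2 = 1$, $T_iT_j = T_jT_i$ for $|i-j|>1$, and $T_iT_{i+1}T_i = T_{i+1}T_iT_{i+1}$ directly from the relations in Proposition~\ref{prop:relations}, and then deduce the independence statements formally from these relations together with Matsumoto's theorem / Tits's word theorem. For the first relation, I would compute $T_i^2 = (s_i + \del_i)^2 = s_i^2 + s_i\del_i + \del_i s_i + \del_i^2$. Here $s_i^2 = 1$ and $\del_i^2 = 0$ by \eqref{nil}, so it remains to check $s_i\del_i + \del_i s_i = 0$. From \eqref{commute} we have $\del_i s_i = \del_{i,i+1} s_i = s_i \del_{s_i(i),s_i(i+1)} = s_i\del_{i+1,i} = -s_i\del_i$ using \eqref{negative}, which gives the cancellation. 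The commuting relation for $|i-j|>1$ is immediate: $s_i$ and $s_j$ commute, $\del_i$ and $\del_j$ commute by \eqref{nonadjacent}, and $s_i\del_j = \del_j s_i$ again by \eqref{commute} since $s_i$ fixes $j$ and $j+1$.

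The braid relation is the one substantive computation. I would expand both $T_iT_{i+1}T_i$ and $T_{i+1}T_iT_{i+1}$ into eight terms each, grouped by the number of $s$'s versus $\del$'s. The all-$s$ terms agree because the $s_i$ satisfy the braid relation, and the all-$\del$ terms agree by \eqref{braid}. The mixed terms must be matched up using \eqref{commute} to push $s$'s past $\del$'s (turning $\del$ indices into their images) and then using \eqref{triangle} (the ``triangle'' relation $\del_{ij}\del_{jk} = \del_{ik}\del_{ij} + \del_{jk}\del_{ik}$) to reconcile the one-$\del$ and two-$\del$ terms. Concretely, writing $i+1 = j$ and $i+2 = k$ so the three wires are $i,j,k$, a term like $s_i s_j \del_i = s_i s_j \del_{ij}$ rewrites via \eqref{commute} as $\del_{jk} s_i s_j$, and systematically carrying this out shows the collection of mixed terms on each side sums to the same operator. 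This is exactly the kind of identity the paper has set up \eqref{negative}--\eqref{commute} to handle, so it is bookkeeping rather than anything deep, but it is where an error is most likely, so I would organize it by carefully tracking which transposition each $\del$ becomes after the $s$'s are moved left.

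Once the three Coxeter relations hold, independence of $T_w$ from the chosen expression follows because any two words $s_{i_1}\cdots s_{i_\ell}$ and $s_{i'_1}\cdots s_{i'_{\ell'}}$ representing the same element of $S_n$ are related by a sequence of the three types of Coxeter moves (including $s_is_i \to \mathrm{(empty)}$, which is legitimate here since we have $T_i^2 = 1$), and each move leaves the product $T_{i_1}\cdots T_{i_\ell}$ unchanged; this is the standard presentation of $S_n$ applied to the monoid homomorphism sending $s_i \mapsto T_i$. Independence of $\tdel_{w/v}$ is then a consequence: by \eqref{commute} every operator of the form $v' \cdot (\text{word in the } \del_{ij})$ can be written uniquely with $v'$ on the left and the $\del$-part lying in the subalgebra generated by the $\del_{ij}$ acting as operators annihilating no new information --- more precisely, the operators $v$ for $v \in S_n$ are linearly independent over the algebra generated by the $\del_{ij}$ in an appropriate graded sense (the top-degree part recovers the honest action of $S_n$, which is faithful), so the decomposition $T_w = \sum_{v} v\,\tdel_{w/v}$ is unique, and since $T_w$ is independent of the expression, so is each $\tdel_{w/v}$. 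The main obstacle is thus not conceptual but the braid-relation verification; everything else is a formal consequence of already-established relations.
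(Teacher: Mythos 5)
Your direct verification of the three Coxeter relations for the $T_i$ is fine, and Matsumoto's theorem then does give that $T_w$ is independent of the chosen (not necessarily reduced) expression for $w$. The problem is the final step, where you try to pass from well-definedness of $T_w$ to well-definedness of $\tdel_{w/v}$ by asserting that the decomposition $T_w = \sum_v v\,\tdel_{w/v}$ is unique, i.e.\ that the operators $v \in S_n$ are ``linearly independent over the algebra generated by the $\del_{ij}$.'' That claim is false. For example, $\del_1 f$ is $s_1$-invariant for every $f$, so $s_1\del_1 = \del_1$ as operators (equivalently, from \eqref{commute} and \eqref{negative}, $\del_1 s_1 = s_1\del_{21} = -s_1\del_1$ while also $\del_1 s_1 = -\del_1$). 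Hence $1\cdot\del_1 - s_1\cdot\del_1 = 0$ is a nontrivial dependence, and the decomposition into summands $v\cdot(\text{$\del$-word})$ is genuinely non-unique. Your parenthetical grading remark only controls the degree-$0$ (constant) parts of the $\tdel_{w/v}$; it does nothing for the degree $\geq 1$ parts, which is exactly where the dependence above lives. So ``$T_w$ is well-defined'' does not imply ``$\tdel_{w/v}$ is well-defined,'' and the second half of the proposition remains unproved.

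The paper sidesteps this entirely by arguing directly about $\tdel_{w/v}$ rather than bootstrapping from $T_w$: any two expressions for $w$ differ by Coxeter moves applied to contiguous blocks, and in the defining sum $\tdel_{w/v} = v^{-1}\sum_J \varphi_J$ one can fix the part of $J$ outside the modified block and observe that the inner sum over the block is exactly a $\tdel$ for a three-letter (or shorter) word. This reduces the whole statement to a handful of explicit hand checks of the form $\tdel_{s_is_{i+1}s_i/\,v} = \tdel_{s_{i+1}s_is_{i+1}/\,v}$ (and the analogous $s_is_i$ and commuting cases), each verified using relations \eqref{negative}--\eqref{triangle}. That argument establishes well-definedness of $\tdel_{w/v}$ directly, and well-definedness of $T_w = \sum_v v\,\tdel_{w/v}$ then comes along for free. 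To repair your proof you would need to replace the uniqueness claim with this kind of direct reduction (or find some other argument that respects the actual relations among the $v\,\del_{ij}$'s).
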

\begin{proof}
	Any two expressions for $w$ can be obtained from one another by repeatedly applying Coxeter relations to some contiguous subexpression. By fixing the subset $J$ outside this subexpression, we find that it suffices to show that $T_w$ and $\tdel_{w/v}$ are well-defined whenever $w$ appears in some Coxeter relation. This follows from a straightforward calculation. For example, if $w = s_is_{i+1}s_i$ and $v=s_i$, then
	\[\tdel_{w/v} = s_i^{-1}(s_i\del_{i+1}\del_i + \del_i\del_{i+1}s_i) = \del_{i+1,i+2}\del_{i,i+1}+\del_{i+1,i}\del_{i,i+2},\]
	whereas if $w=s_{i+1}s_is_{i+1}$, then
	\[\tdel_{w/v} = s_i^{-1}(\del_{i+1}s_i\del_{i+1}) = \del_{i,i+2}\del_{i+1,i+2}.\]
	But these two expressions can be equated using relations \eqref{negative} and \eqref{triangle}. Similarly, if $v = id$, then in both cases, $\tdel_{w/v} = \del_w + \del_{i,i+2}$. The other cases follow similarly.
\end{proof}


The operators $T_w$ satisfy the following Leibniz rule.

\begin{prop} \label{prop:leibniz}
	Let $P,Q \in \CC[x_1, \dots, x_n]$.
	\begin{enumerate}[(a)]
		\item For any $i \in [n-1]$,
		\[T_i(PQ) = (\del_iP) \cdot Q + (s_iP) \cdot (T_iQ) .\]
		\item 
		For any permutation $w \in S_n$,
		\[T_w(PQ) = \sum_{v} v(\tdel_{w/v}P) \cdot T_vQ = \sum_{u,v} v(\tdel_{w/v}P) \cdot u(\tdel_{v/u}Q).\]
	\end{enumerate}
\end{prop}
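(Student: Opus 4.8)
The plan is to prove part (a) directly from the definition of $T_i$ and the ordinary Leibniz rule \eqref{leibniz}, and then to bootstrap part (b) by induction on $\ell(w)$, or more precisely on the length of a chosen expression for $w$, using part (a) as the base case.

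For part (a), I would simply write $T_i(PQ) = s_i(PQ) + \del_i(PQ) = (s_iP)(s_iQ) + (\del_iP)\cdot Q + (s_iP)\cdot(\del_iQ)$, where the last two terms come from \eqref{leibniz}. Grouping the first and third terms gives $(s_iP)\big((s_iQ)+(\del_iQ)\big) + (\del_iP)\cdot Q = (s_iP)(T_iQ) + (\del_iP)\cdot Q$, which is the claimed identity. Note this is not symmetric in $P$ and $Q$, which is expected since $T_i$ is a twisted, not untwisted, operator.

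For part (b), fix a (not necessarily reduced) expression $w = s_{i_1}\cdots s_{i_\ell}$ and induct on $\ell$. For $\ell = 0$ both sides are $PQ$. For the inductive step write $w = s_{i_1} w'$ with $w' = s_{i_2}\cdots s_{i_\ell}$, so $T_w = T_{i_1}T_{w'}$. By the inductive hypothesis $T_{w'}(PQ) = \sum_{v'} v'(\tdel_{w'/v'}P)\cdot T_{v'}Q$. Now apply $T_{i_1}$ and use part (a) with the two factors being $v'(\tdel_{w'/v'}P)$ and $T_{v'}Q$: this produces $\del_{i_1}\!\big(v'(\tdel_{w'/v'}P)\big)\cdot T_{v'}Q + s_{i_1}\!\big(v'(\tdel_{w'/v'}P)\big)\cdot T_{i_1}T_{v'}Q$. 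In the first term use \eqref{commute} to write $\del_{i_1}v' = v'\del_{(v')^{-1}(i_1),(v')^{-1}(i_1+1)}$; in the second term $s_{i_1}v' = (s_{i_1}v')$ and $T_{i_1}T_{v'} = T_{s_{i_1}v'}$. Summing over $v'$ and reindexing by $v \in S_n$ (collecting, for each $v$, the contribution with $v' = v$ from the first term and the contribution with $s_{i_1}v' = v$, i.e.\ $v' = s_{i_1}v$, from the second term) yields $T_w(PQ) = \sum_v v\big(\Psi_v P\big)\cdot T_vQ$ for some operators $\Psi_v$ in the algebra generated by the $\del_{ij}$. Finally I would identify $\Psi_v = \tdel_{w/v}$: both sides are determined by the recursion $\tdel_{w/v} = \del_{(v)^{-1}(i_1),(v)^{-1}(i_1)+1}\,\tdel_{w'/v} + \tdel_{w'/s_{i_1}v}$ obtained directly from the explicit formula $\tdel_{w/v} = v^{-1}\sum_J \varphi_J$ by splitting subsets $J$ according to whether $1 \in J$, together with \eqref{commute}; alternatively, one can observe that the map $P \mapsto T_w(PQ)$ for fixed $Q$, expanded with all symmetric-group elements moved to the left, has $v(-P)\cdot T_vQ$-coefficient exactly $\tdel_{w/v}$ by the very definition of $T_w = \sum_v v\tdel_{w/v}$ combined with the already-established part (b)-type expansion of $T_vQ$. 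The second equality in (b) then follows by applying part (b) once more (or its already-proven half) to expand $T_vQ = \sum_u u(\tdel_{v/u}Q)$.

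The main obstacle is bookkeeping: making the reindexing $v' \mapsto v$ in the inductive step precise and matching the resulting recursion for $\Psi_v$ with the one satisfied by $\tdel_{w/v}$, taking care that the commutation relation \eqref{commute} introduces exactly the right divided difference operator $\del_{v^{-1}(i_1),v^{-1}(i_1+1)}$ (up to sign via \eqref{negative}) and that this matches the effect of prepending $\del_{i_1}$ or $s_{i_1}$ to the subset $J$ in the formula $\tdel_{w/v} = v^{-1}\sum_J\varphi_J$. Once the recursion is seen to agree on both sides and the base case checks out, the induction closes.
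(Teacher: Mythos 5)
Your proof is correct and follows essentially the same route as the paper. Part (a) is the same direct Leibniz-rule computation, and your induction on the length of the word with recursion matching in part (b) is an explicit unpacking of what the paper states compactly as ``iteratively applying (a)'' and reading off the coefficient of $T_vQ$ as $\sum_J \varphi_J P = v(\tdel_{w/v}P)$; the two presentations are the same argument. (One small typo: the recursion should read $\tdel_{w/v} = \del_{v^{-1}(i_1),\,v^{-1}(i_1+1)}\tdel_{w'/v} + \tdel_{w'/s_{i_1}v}$, with $v^{-1}(i_1+1)$ rather than $v^{-1}(i_1)+1$, as you in fact write correctly elsewhere.)
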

\begin{proof}
	For (a), the right hand side equals
	\[\frac{P-s_iP}{x_i-x_{i+1}} \cdot Q + (s_i P) \cdot \left(s_i Q + \frac{Q - s_i Q}{x_i-x_{i+1}}\right) = \frac{PQ-(s_iP)(s_iQ)}{x_i-x_{i+1}} + (s_iP)(s_iQ).\]
	The first term then equals $\del_i(PQ)$ while the second equals $s_i(PQ)$, and these sum to $T_i(PQ)$, as desired.
	
	For (b), let $s_{i_1} \cdots s_{i_\ell}$ be an expression for $w$. By iteratively applying (a), we get a term containing $T_vQ$ for every subset $J \subset [\ell]$ such that $\prod_{k \in J} T_k = T_v$. The coefficient of $T_vQ$ in this term is then exactly $\varphi_J$, so summing over all $J$ gives $v(\tdel_{w/v}P)$ by definition.
\end{proof}

\subsection{Positivity}
We will show that $\tdel_{w/v}$ can always be expressed as a polynomial in $\del_{ij}$, $i < j$, with positive coefficients by proving the following theorem.

\begin{thm} \label{thm:tdel}
Let $v,w \in S_n$. Choose a reduced expression $w_0v = s_{i_1} \cdots s_{i_\ell}$, and define $\alpha_m$ and $\beta_m$ as in $(*)$. Then
\[\tdel_{w/v} = \sum_J \prod_{j \notin J} \del_{\alpha_j\beta_j},\]
where $J \subset \{1, \dots, \ell\}$ ranges over all subsets such that $\prod_{j \in J} s_{i_j} = w_0w$ (not necessarily reduced).
\end{thm}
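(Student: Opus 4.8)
The plan is to reduce the statement to a single computation via the algebra of divided difference operators, using the factorization in Proposition~\ref{prop:factor} to translate between subsets of a reduced word and elements of Bruhat order. First I would record the key identity: for the generating operator, $T_i = s_i + \del_i$, and for any $w$, $T_w = \sum_v v\tdel_{w/v}$ by definition. Since $\tdel_{w/v}$ does not depend on the chosen (not necessarily reduced) expression for $w$, it suffices to prove the formula for one convenient expression. The natural move is to write $w = w_0 \cdot (w_0 w)$ and choose expressions compatibly with a reduced word $w_0 v = s_{i_1} \cdots s_{i_\ell}$; note $\ell = \ell(w_0 v) = \binom n2 - \ell(v)$.

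The core step is to understand $T_{w_0}$ and then ``divide off'' the part corresponding to $v$. Concretely, I would argue as follows. Fix the reduced expression $w_0 v = s_{i_1} \cdots s_{i_\ell}$. Then $w_0 = (w_0 v) \cdot v^{-1}$; pick any reduced word for $v^{-1}$ and concatenate, giving an expression for $w_0$ of length $\ell + \ell(v) = \binom n2$ (hence reduced). Apply Proposition~\ref{prop:leibniz}(b) together with the explicit formula $\tdel_{w/v} = v^{-1}\sum_J \varphi_J$, where $\varphi_J$ is the product with $s_{i_j}$ in positions $j \in J$ and $\del_{i_j}$ elsewhere, and $J$ ranges over subsets with $\prod_{j \in J}s_{i_j} = v$. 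The claim is that in $\varphi_J$ one may always take $J$ to be a subset of the final $\ell(v)$ positions (i.e.\ the $v^{-1}$-block), and moreover that on that block one may take the full reduced word for $v^{-1}$ read in reverse — at which point the $\varphi_J$-sum collapses to a sum over subsets $K$ of just the first $\ell$ positions with $\prod_{k \in K} s_{i_k} = w_0 w$, each contributing $\prod_{j \notin K}\del_{i_j}$ times the operator $v^{-1}\prod(\text{reverse reduced word of }v^{-1})$, which equals the identity after the leading $v^{-1}$ is cancelled. The remaining product $\prod_{j\notin K}\del_{i_j}$ is, by repeated use of relation~\eqref{commute} to move the implicit permutations, exactly $\prod_{j\notin K}\del_{\alpha_j\beta_j}$ — this is precisely the content of Proposition~\ref{prop:factor} applied to the complement of $K$, together with equation~$(*)$ which says deleting $s_{i_j}$ corresponds to multiplying by $s_{\alpha_j\beta_j}$ on the right.

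More carefully, I expect the cleanest route is an induction on $\ell(v)$, or equivalently on $\ell(w_0 v)$ downward from $w_0$. The base case $v = w_0$ has $w_0 v = \mathrm{id}$, the only subset $J = \emptyset$, and the formula reads $\tdel_{w_0/w_0} = w_0^{-1}\cdot(\text{product of all }\del_{i_j}\text{ in a reduced word for }w_0)$; one checks directly this matches $\sum_K \prod_{j\notin K}\del_{\alpha_j\beta_j}$ with the empty word, i.e.\ it equals $\del_{w_0 w}$ when $\ell(w) = \binom n2$ and here forces $w = \mathrm{id}$ — a degenerate check. For the inductive step, write $w_0 v = s_{i_1}\cdot (w_0 v')$ where $v' = s_{i_1} w_0 v$ has $\ell(v') = \ell(v)+1$, peel off the leading $T_{i_1}$ using $T_{w_0 v^{-1} \cdots} $ relations, and match the recursion $\tdel$-side against the recursion on the subset-sum side. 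The identity $T_i T_i = 1$ is what makes the expression independent of reducedness and is the mechanism by which non-reduced subsets get absorbed.

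The main obstacle is bookkeeping: matching which subsets $J$ of the long word for $w$ correspond to which subsets $K$ of the $\ell$-letter word for $w_0 v$, and verifying that the permutation prefactors ($v^{-1}$ composed with the $S_n$-elements pulled left through the $\del$'s) cancel to give exactly $\prod_{j\notin K}\del_{\alpha_j\beta_j}$ with the stated indices. The conceptual point that makes this work — and where I'd focus the write-up — is that $(*)$ identifies ``omitting the $j$th crossing'' with ``right-multiplying by $s_{\alpha_j\beta_j}$,'' so that the complement-of-$K$ product of $\del$'s is forced to be $\del_{\alpha_j\beta_j}$-indexed by relation~\eqref{commute}, while the condition $\prod_{k\in K}s_{i_k} = w_0 w$ is exactly Proposition~\ref{prop:factor} saying the retained letters multiply to $w \cdot \prod_{j\notin K}s_{\alpha_j\beta_j} \cdot (\text{stuff})^{-1}$ rearranged. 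Once the indexing is pinned down, the positivity statement (Theorem~\ref{thm:tdel} gives $\tdel_{w/v}$ as a nonnegative combination of products of $\del_{ij}$ with $i<j$, after using~\eqref{negative} to reorder indices) follows since every summand $\prod_{j\notin J}\del_{\alpha_j\beta_j}$ has $\alpha_j < \beta_j$ by construction.
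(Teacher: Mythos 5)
Your proposal correctly anticipates the overall strategy --- induct on $\ell(w_0v)$, peeling off the leading letter $s_{i_1}$ of the reduced word for $w_0v$ --- but it leaves the key step as a black box, and that step is exactly where the content of the theorem lives. You say to ``peel off the leading $T_{i_1}$ $\ldots$ and match the recursion on the subset-sum side,'' but you never state what recursion the operators $\tdel_{w/v}$ satisfy. The induction requires establishing, for $\ell(s_iv)>\ell(v)$, the identity
\[
\tdel_{w/v} \;=\; \tdel_{s_iw/s_iv} + \del_{\alpha\beta}\,\tdel_{w/s_iv}, \qquad \alpha = v^{-1}(i),\ \beta = v^{-1}(i+1),
\]
proved by taking an expression for $s_iw$ beginning with $s_{i_0}=s_i$ and separating subsets $J$ by whether $0 \in J$ (contributing $\tdel_{w/v}$) or $0 \notin J$ (contributing $\del_{\beta\alpha}\tdel_{w/s_iv}=-\del_{\alpha\beta}\tdel_{w/s_iv}$). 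Without this lemma the inductive step has no mechanism. Also, $T_iT_i=1$ is not a substitute: it underlies the well-definedness of $\tdel_{w/v}$, but the recurrence above is a separate structural fact that needs its own proof, and your write-up does not supply it.

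Two further issues. Your $v' = s_{i_1}w_0v$ is actually $w_0v'$, not $v'$: the correct move is to set $i=n-i_1$ so that $s_i = w_0 s_{i_1} w_0$ and act by $s_i$ on the left of both $v$ and $w$, which is why the recurrence above has both $v\mapsto s_iv$ and $w\mapsto s_iw$. And the ``direct'' computation in your second paragraph conflates $w$ with $w_0$: you construct a reduced expression for $w_0$ (a reduced word for $w_0v$ followed by one for $v^{-1}$), but the definition of $\tdel_{w/v}$ requires an expression for $w$, which is a different permutation. The claim that $J$ may ``always be taken to be a subset of the final $v^{-1}$-block'' is unjustified and does not produce $\tdel_{w/v}$ for general $w$; at best it concerns $\tdel_{w_0/v}$.
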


This theorem generalizes the analogous positivity result proved in \cite{Liudd} for $\del_{w/v}$, which was the same except that the expression for $w_0w$ must be reduced.

%
\begin{ex} \label{ex:12321pos}
	Let $w = s_1s_2s_3s_2s_1$ and $v=s_1$ as in Example~\ref{ex:12321}. Although the expression for $\tdel_{w/v}$ in that example does not have positive coefficients, using the relation \eqref{triangle} we can rewrite it as
	\begin{align*}
	\tdel_{w/v} &= \del_{23}\del_{34}\del_{23}\del_{12} - \del_{12}\del_{13}\del_{34}\del_{13} + \del_{24}\del_{12} - \del_{12}\del_{14}\\
	&= \del_{23}\del_{34}(\del_{13}\del_{23} + \del_{12}\del_{13}) - (\del_{23}\del_{12} - \del_{13}\del_{23})\del_{34}\del_{13} + \del_{14}\del_{24}\\
	&=\del_{23}\del_{34}\del_{13}\del_{23} + \del_{13}\del_{23}\del_{34}\del_{13} + \del_{14}\del_{24},
	\end{align*}
	which does have positive coefficients.
	
	From Theorem~\ref{thm:tdel}, we can obtain a positive formula more directly using the reduced expression $w_0v = s_1s_2s_3s_1s_2$. As seen in the diagram below, the values of $\del_{\alpha_j\beta_j}$ for $j=1, \dots, 5$ are $\del_{34},\del_{14},\del_{24},\del_{13},\del_{23}$. 
	\[
	\begin{tikzpicture}[scale=.7]
	\draw[blue] (0,4) to[out = 0, in=180] (1,3) to[out = 0, in=180] (2,2) to[out = 0, in=180] (3,1)--(5,1);
	\draw[green!50!black] (0,3) to[out = 0, in=180] (1,4) -- (3,4) to[out = 0, in=180] (4,3) to[out = 0, in=180](5,2);
	\draw[orange] (0,2) --(1,2) to[out = 0, in=180] (2,3)--(3,3) to [out = 0, in=180] (4,4)--(5,4);
	\draw[red] (0,1)--(2,1) to[out = 0, in=180] (3,2)--(4,2) to[out = 0, in=180] (5,3);
	\node at (5.5,4) {$1$};
	\node at (5.5,3) {$2$};
	\node at (5.5,2) {$3$};
	\node at (5.5,1) {$4$};
	\node at (.5,0) {$34$};
	\node at (1.5,0) {$14$};
	\node at (2.5,0) {$24$};
	\node at (3.5,0) {$13$};
	\node at (4.5,0) {$23$};
	\end{tikzpicture}
	\]
	Since $w_0w = s_2 = s_1s_1s_2$, we can take $J=\{2\}$, $J=\{5\}$, or $J=\{1,4,5\}$.
	Thus taking the terms $\del_{\alpha_j\beta_j}$ for $j \notin J$ gives
	\[\tdel_{w/v} = \del_{34}\del_{24}\del_{13}\del_{23} + \del_{34}\del_{14}\del_{24}\del_{13} + \del_{14}\del_{24}.\]
	One can check that this formula can be transformed into the previous one by an appropriate application of the relations in Proposition~\ref{prop:relations}.
\end{ex}

The proof of Theorem~\ref{thm:tdel} will follow directly from the following lemma.

\begin{lemma} \label{lemma:tdel}
	Let $v,w \in S_n$ and $i \in [n-1]$ such that $\ell(s_i v)>\ell(v)$. Then
	\[\tdel_{w/v} =  \tdel_{s_iw/s_iv} + \del_{\alpha\beta}\tdel_{w/s_iv},\]
	where $\alpha = v^{-1}(i) < \beta = v^{-1}(i+1)$.
\end{lemma}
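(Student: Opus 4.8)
The plan is to prove Lemma~\ref{lemma:tdel} by working directly with the explicit subset-sum formula $\tdel_{w/v} = v^{-1}\sum_J \varphi_J$, where $J$ ranges over subsets of $[\ell]$ with $\prod_{j\in J} s_{i_j} = v$ for a fixed (not necessarily reduced) expression $w = s_{i_1}\cdots s_{i_\ell}$. The key idea is to choose the expression for $w$ cleverly: since $\ell(s_iv) > \ell(v)$ and we want to relate $\tdel_{w/v}$ to operators indexed by $s_iv$, I would prepend a simple transposition, writing $w = s_i\cdot(s_iw)$, and pick any expression $s_iw = s_{i_2}\cdots s_{i_\ell}$ so that $w = s_i s_{i_2}\cdots s_{i_\ell}$ with $i_1 = i$. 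Then every subset $J\subset[\ell]$ either contains $1$ or does not, and I would split the sum accordingly.

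First I would handle the case $1\notin J$: then $\varphi_1(J) = \del_i$, and the remaining indices $J\subset\{2,\dots,\ell\}$ must satisfy $\prod_{j\in J}s_{i_j} = v$ as a subword of the expression $s_iw = s_{i_2}\cdots s_{i_\ell}$. This is exactly the index set defining $\tdel_{s_iw/v}$, so this part of the sum contributes $v^{-1}\del_i\cdot(\text{stuff})$, which after moving $v^{-1}$ across using \eqref{commute} I want to recognize. Actually the cleaner route: I should instead expand $T_w = T_i T_{s_iw}$ at the operator level and use $T_i = s_i + \del_i$ together with Proposition~\ref{prop:leibniz}(b) is not quite it either — the most transparent approach is to use $T_w = T_i T_{s_iw}$, write $T_{s_iw} = \sum_u u\,\tdel_{s_iw/u}$, and then compute $T_i\cdot u\,\tdel_{s_iw/u} = (s_i + \del_i) u\, \tdel_{s_iw/u}$. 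Using $s_i u = (s_iu)$ and $\del_i u = u\,\del_{u^{-1}(i),u^{-1}(i+1)}$ from \eqref{commute}, the first term feeds into the $v = s_iu$ piece and the second into a correction term. Matching coefficients of $v$ on the left (i.e. collecting all ways to get $v$) then yields a recursion; to land on $\tdel_{w/v}$ I set $v = s_iu$, i.e. $u = s_iv$ (legal since $\ell(s_iv)>\ell(v)$ does not obstruct this, we just need $u$ to range correctly), and also pick up the term where $s_i u = u$ is impossible so only the $u = v$ contribution to the $\del_i$-term survives with coefficient involving $\del_{v^{-1}(i),v^{-1}(i+1)}$.

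Let me restate the main steps concretely. Step 1: write $T_w = T_i T_{s_iw}$ and expand $T_{s_iw} = \sum_{u\in S_n} u\,\tdel_{s_iw/u}$. Step 2: apply $T_i = s_i + \del_i$ termwise; using \eqref{commute}, $s_i\cdot u\,\tdel_{s_iw/u} = (s_iu)\,\tdel_{s_iw/u}$ and $\del_i\cdot u\,\tdel_{s_iw/u} = u\,\del_{u^{-1}(i),u^{-1}(i+1)}\,\tdel_{s_iw/u}$. Step 3: collect the coefficient of the group element $v$ on the left-hand side $\sum_v v\,\tdel_{w/v}$: from the $s_i$-part we get the term with $s_iu = v$, i.e. $u = s_iv$, contributing $\tdel_{s_iw/s_iv}$; from the $\del_i$-part we get the term with $u = v$, contributing $\del_{v^{-1}(i),v^{-1}(i+1)}\,\tdel_{s_iw/v}$. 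Step 4: observe $\del_{v^{-1}(i),v^{-1}(i+1)} = \del_{\alpha\beta}$ since $\alpha = v^{-1}(i) < \beta = v^{-1}(i+1)$ (using $\ell(s_iv) > \ell(v) \iff v^{-1}(i) < v^{-1}(i+1)$), and note $\tdel_{s_iw/v} = \tdel_{w/s_iv}$ — wait, this last identity needs care, so this is where I'd be cautious.

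The main obstacle is precisely reconciling the index shift: the naive computation gives $\tdel_{w/v} = \tdel_{s_iw/s_iv} + \del_{\alpha\beta}\,\tdel_{s_iw/v}$, whereas the lemma asserts $\tdel_{w/v} = \tdel_{s_iw/s_iv} + \del_{\alpha\beta}\,\tdel_{w/s_iv}$. So I need to show $\del_{\alpha\beta}\,\tdel_{s_iw/v} = \del_{\alpha\beta}\,\tdel_{w/s_iv}$, or more likely I've mislabeled which factor gets $T_i$ on which side. The fix is to instead write $w = (ws_j)s_j$ appending on the \emph{right}, or to track carefully whether $T_i$ acts first or last; I expect that expanding $T_w = T_i T_{s_iw}$ and reading off coefficients requires using the relation $\tdel_{s_iw/u}$ in terms of $\tdel_{w/\cdot}$ via the left-multiplication structure, and the honest computation will show the surviving $\del_i$-term is $\del_i\cdot(\text{coeff of }s_iv\text{ in }T_{s_iw}\cdot\text{...})$ conjugated appropriately — i.e. one must be careful that $\del_i$ commutes past $v$ to produce $\del_{\alpha\beta}$ acting on $\tdel_{w/s_iv}$, not on $\tdel_{s_iw/v}$. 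Resolving this bookkeeping (equivalently, choosing the expansion $T_w = T_i T_{s_i w}$ vs.\ relating $T_{s_iw}$ to $T_w$ and matching the group element to the \emph{right} of $\del_i$) is the crux; once the indices are aligned, positivity of the claimed expansion and the proof of Theorem~\ref{thm:tdel} follow by a straightforward induction on $\ell(w_0 v) = \binom{n}{2} - \ell(v)$, peeling off one simple reflection at a time and invoking Proposition~\ref{prop:factor} to identify the pairs $(\alpha_j,\beta_j)$.
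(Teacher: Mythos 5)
You correctly set up the right kind of calculation, but the key step you flag as the crux is genuinely unresolved, and your guess about how it resolves is off. Expanding $T_w = T_i T_{s_iw}$ and collecting the coefficient of the group element $v$ really does yield
\[
\tdel_{w/v} = \tdel_{s_iw/s_iv} + \del_{\alpha\beta}\,\tdel_{s_iw/v},
\]
not the lemma's right-hand side — you are not mislabeling, and no amount of "being careful commuting $\del_i$ past $v$" will turn $\tdel_{s_iw/v}$ into $\tdel_{w/s_iv}$ in that term. The identity $\del_{\alpha\beta}\tdel_{s_iw/v} = \del_{\alpha\beta}\tdel_{w/s_iv}$ is true, but it requires a separate argument: expanding instead $T_{s_iw}=T_iT_w$ and matching the coefficient of $v$ gives $\tdel_{s_iw/v}=\tdel_{w/s_iv}+\del_{\alpha\beta}\tdel_{w/v}$, and then $\del_{\alpha\beta}^2=0$ from relation \eqref{nil} kills the extra term after left-multiplying by $\del_{\alpha\beta}$. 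Without that (or some equivalent), your derivation stops short of the claimed equality.

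The paper takes a cleaner path that avoids the detour entirely: it prepends $s_i$ to an expression for $w$ (so getting an expression for $s_iw$), computes $\tdel_{s_iw/s_iv}$ directly from the subset-sum formula, and splits on whether the leading $s_i$ lies in $J$. The case $0\in J$ gives $\tdel_{w/v}$; the case $0\notin J$ gives $\del_{\beta\alpha}\tdel_{w/s_iv}=-\del_{\alpha\beta}\tdel_{w/s_iv}$ (with the sign coming from $(s_iv)^{-1}(i)=\beta>\alpha$). Rearranging $\tdel_{s_iw/s_iv}=\tdel_{w/v}-\del_{\alpha\beta}\tdel_{w/s_iv}$ gives the lemma directly. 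In the operator language this is exactly expanding $T_{s_iw}=T_iT_w$ and matching the coefficient of $s_iv$, not $v$. So: your choice of which product to expand ($T_w=T_iT_{s_iw}$) and which coefficient to match ($v$) lands on the "dual" identity, and closing the gap requires either switching to the paper's choice or supplying the $\del_{\alpha\beta}^2=0$ argument explicitly — neither of which your proposal does.
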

\begin{proof}
	Consider any expression $s_iw = s_{i_0}s_{i_1} \cdots s_{i_\ell}$ starting with $s_i = s_{i_0}$. To compute $\tdel_{s_iw/s_iv}$, we must find a subset $J \subset \{0, 1, \dots, \ell\}$ such that $\prod_{j \in J} s_{i_j} = s_iv$ and then use relation \eqref{commute} to move these terms to the left. If $0 \in J$, then $J \setminus \{0\}$ defines a subword of $s_{i_1} \cdots s_{i_\ell} = w$ whose product is $v$, so these terms contribute $\tdel_{w/v}$ to $\tdel_{s_iw/s_iv}$. If $0 \notin J$, then $J$ defines a subword of $s_{i_1} \cdots s_{i_\ell} = w$ whose product is $s_iv$, so these terms contribute $\del_{\beta\alpha}\tdel_{w/s_iv}$, where $\beta = (s_iv)^{-1}(i) = v^{-1}(i+1)$ and $\alpha = (s_iv)^{-1}(i+1) = v^{-1}(i)$. (Note $\alpha < \beta$ since $\ell(s_iv) > \ell(v)$.) It follows that
	\[\tdel_{s_iw/s_iv} = \tdel_{w/v} - \del_{\alpha\beta} \tdel_{w/s_iv}.\]
	Rearranging gives the desired equality.
%
\end{proof}

It is now straightforward to deduce Theorem~\ref{thm:tdel}.

\begin{proof}[Proof of Theorem~\ref{thm:tdel}]
	We induct on $\ell(w_0v)$. When $\ell(w_0v) = 0$, $v=w_0$, so $\tdel_{w/v}$ can only be nonzero when $w=w_0$ as well, in which case $\tdel_{w/v} = 1$, as desired.
	
	Otherwise, choose a reduced expression $w_0v = s_{i_1} \cdots s_{i_\ell}$. Let $i = n-i_1$, so that $s_i = w_0 s_{i_1} w_0$ and $w_0 s_i v = s_{i_2} \cdots s_{i_\ell}$. Then $\ell(s_iv) > \ell(v)$, so by Lemma~\ref{lemma:tdel},
	$\tdel_{w/v} = \tdel_{s_iw/s_iv} + \del_{\alpha\beta}\tdel_{w/s_iv},$
	where 
	\begin{align*}
	\alpha &= v^{-1}(i) = s_{i_\ell} \cdots s_{i_2}s_{i_1} w_0(i) = s_{i_\ell} \cdots s_{i_2}s_{i_1}(i_1+1) = s_{i_\ell}\cdots s_{i_2}(i_1) = \alpha_1,\\
	\beta &= v^{-1}(i+1) = s_{i_\ell} \cdots s_{i_2}s_{i_1} w_0(i+1) = s_{i_\ell} \cdots s_{i_2}s_{i_1}(i_1) = s_{i_\ell}\cdots s_{i_2}(i_1+1) = \beta_1.
	\end{align*}
	By the inductive hypothesis, we have that
	\[\tdel_{w/v} =
	\tdel_{s_iw/s_iv} + \del_{\alpha_1\beta_1}\tdel_{w/s_iv} = \sum_{J'} \prod_{1<j \notin J'} \del_{\alpha_j\beta_j} + \del_{\alpha_1\beta_1}\sum_{J''} \prod_{1<j \notin J''} \del_{\alpha_j\beta_j},\]
	where $J',J'' \subset \{2, \dots, \ell\}$, $\prod_{j \in J'} s_{i_j} = w_0s_iw = s_{i_1}w_0w$, and $\prod_{j \in J''} s_{i_j} = w_0w$. Thus  either $J=\{1\}\cup J'$ or $J = J''$ implies $\prod_{j \in J} s_{i_j} = w_0w$, so the right hand side equals the desired expression $\sum_J \prod_{j \notin J} \del_{\alpha_j\beta_j}$.
\end{proof}

\subsection{Polynomial action}

The operators $\tdel_{w/v}$ act particularly nicely on the \emph{elementary symmetric polynomials} $e_m^{(k)}$ and \emph{complete homogeneous symmetric polynomials} $h_m^{(k)}$.

For any subset $A \subset [n]$, denote
\[
e_m(A) = \sum_{\substack{i_1 < \cdots < i_m\\i_1, \dots, i_m \in A}}x_{i_1} \cdots x_{i_m},\qquad
h_m(A) = \sum_{\substack{i_1 \leq \cdots \leq i_m\\i_1, \dots, i_m \in A}}x_{i_1} \cdots x_{i_m}.
\]
By convention, $e_0(A) = h_0(A) = 1$ and $e_m(A) =h_m(A)= 0$ for $m<0$. We will abbreviate $e_m^{(k)} = e_m(\{1, \dots, k\})$ and $h_m^{(k)} = h_m(\{1, \dots, k\})$.

We then have the following action of divided difference operators.
\begin{lemma} \label{lemma:del-e}
\begin{align*}
\del_{ij} e_m(A) &= \begin{cases}
e_{m-1}(A \setminus \{i\})&\text{if }i \in A, j \notin A,\\
-e_{m-1}(A \setminus \{j\})&\text{if }i \notin A, j \in A,\\
0&\text{otherwise;}
\end{cases}\\
\del_{ij} h_m(A) &= \begin{cases}
h_{m-1}(A \cup \{j\})&\text{if }i \in A, j \notin A,\\
-h_{m-1}(A \cup \{i\})&\text{if }i \notin A, j \in A,\\
0&\text{otherwise.}
\end{cases}
\end{align*}
\end{lemma}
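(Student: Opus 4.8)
The plan is to verify each formula by a direct computation using the definition $\del_{ij} = \frac{1 - s_{ij}}{x_i - x_j}$, splitting into the stated cases according to which of $i$, $j$ lie in $A$. The key observation that makes everything routine is a factorization of the generating functions: writing $E_A(t) = \sum_{m \geq 0} e_m(A) t^m = \prod_{a \in A}(1 + x_a t)$ and $H_A(t) = \sum_{m \geq 0} h_m(A) t^m = \prod_{a \in A}(1 - x_a t)^{-1}$, one can package all the identities for a fixed pair $(i,j)$ into a single statement about these products and then extract the coefficient of $t^{m}$ (noting that $\del_{ij}$ lowers degree by one, so $\del_{ij} e_m(A)$ should be read off from the coefficient of $t^{m}$ after multiplying the answer by $t$, or equivalently from the coefficient of $t^{m-1}$).

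First I would dispose of the ``otherwise'' cases. If $i, j \notin A$, then $e_m(A)$ and $h_m(A)$ involve neither $x_i$ nor $x_j$, so $s_{ij}$ fixes them and $\del_{ij}$ annihilates them. If $i, j \in A$, then $e_m(A)$ and $h_m(A)$ are symmetric in $x_i$ and $x_j$ (each is, separately, a symmetric function of the variables indexed by $A$), so again $s_{ij}$ fixes them and $\del_{ij}$ kills them. That leaves the two genuinely symmetric cases $i \in A$, $j \notin A$ and $i \notin A$, $j \in A$, and by relation \eqref{negative} ($\del_{ij} = -\del_{ji}$) the second follows from the first with the roles of $i$ and $j$ swapped; so it suffices to treat $i \in A$, $j \notin A$.

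For that case, set $B = A \setminus \{i\}$, so that $i, j \notin B$. For the elementary polynomials, $E_A(t) = (1 + x_i t) E_B(t)$, hence $e_m(A) = e_m(B) + x_i\, e_{m-1}(B)$; since $s_{ij}$ fixes $e_m(B)$ and $e_{m-1}(B)$ and sends $x_i \mapsto x_j$, we get $e_m(A) - s_{ij} e_m(A) = (x_i - x_j)\, e_{m-1}(B)$, and dividing by $x_i - x_j$ gives $\del_{ij} e_m(A) = e_{m-1}(B) = e_{m-1}(A \setminus \{i\})$, as claimed. For the complete homogeneous polynomials the bookkeeping is slightly more involved because $h$ does not truncate: here I would instead use $A \cup \{j\} = B \cup \{i, j\}$ and the identity $H_{A \cup \{j\}}(t) = (1 - x_j t)^{-1} H_A(t)$, equivalently $(1 - x_j t) H_{A \cup\{j\}}(t) = H_A(t)$, and symmetrically with $i$; subtracting the two relations obtained by applying $1$ and $s_{ij}$, together with the fact that $s_{ij}$ fixes $h_k(A \cup \{j\}) = h_k(B \cup \{i,j\})$ (symmetric in $x_i, x_j$), yields $h_m(A) - s_{ij} h_m(A) = (x_i - x_j)\, h_{m-1}(A \cup \{j\})$ after matching coefficients of $t^m$; dividing by $x_i - x_j$ finishes it.

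The only place requiring any care—and hence the main obstacle, though a mild one—is the complete homogeneous case, where one must be careful not to conflate $h_{m-1}(A)$ with $h_{m-1}(A \cup \{j\})$ and must correctly handle the non-terminating generating function. The cleanest route is the generating-function manipulation above; an alternative is the explicit recursion $h_m(A \cup \{j\}) = \sum_{r \geq 0} x_j^r\, h_{m-r}(A)$, from which $h_m(A \cup \{j\}) - h_m(A \cup \{i\}) = \sum_{r \geq 1}(x_j^r - x_i^r) h_{m-r}(A) = -(x_i - x_j)\sum_{r \geq 1}(x_i^{r-1} + \cdots + x_j^{r-1}) h_{m-r}(A)$, but one then still has to recognize the sum as $h_{m-1}(A \cup \{i, j\})$, so the generating-function argument is preferable. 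Either way, no deep input is needed beyond the definitions and relation \eqref{negative}.
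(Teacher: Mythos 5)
Your proof is correct. The dispatch of the ``otherwise'' cases and the reduction of $i\notin A,\,j\in A$ to $i\in A,\,j\notin A$ via $\del_{ij}=-\del_{ji}$ match the paper. For $e_m$, your decomposition $e_m(A)=e_m(B)+x_i\,e_{m-1}(B)$ with $B=A\setminus\{i\}$ followed by applying $1-s_{ij}$ is essentially the paper's argument, which the paper only writes out for $h_m$ and declares $e_m$ similar. For $h_m$, however, you take a genuinely different route: the paper expands $h_m(A)=\sum_{k}x_i^k\,h_{m-k}(A\setminus\{i\})$, applies $\del_{ij}$ termwise to the powers $x_i^k$ (the only pieces seeing $x_i$ or $x_j$), and then must re-sum the resulting double sum $\sum_{k}\bigl(\sum_{l}x_i^lx_j^{k-l-1}\bigr)h_{m-k}(A\setminus\{i\})$ and recognize it as $h_{m-1}(A\cup\{j\})$. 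You instead apply $1-s_{ij}$ to the generating-function relation $(1-x_jt)\,H_{A\cup\{j\}}(t)=H_A(t)$, using that $H_{A\cup\{j\}}$ is symmetric in $x_i,x_j$, to get $(x_i-x_j)\,t\,H_{A\cup\{j\}}(t)=(1-s_{ij})H_A(t)$, and read off the coefficient of $t^m$. Your route buys a cleaner finish: the product form of $H_A$ makes the identification of the answer as $h_{m-1}(A\cup\{j\})$ automatic and avoids the re-summation; the paper's route is marginally more elementary in that it never invokes formal power series and only needs $\del_{ij}$ on monomials $x_i^k$.
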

\begin{proof}
	We prove the result for $h_m(A)$, as the proof for $e_m(A)$ is similar. Suppose $i \in A$ and $j \notin A$. Then
	\begin{align*}
	\del_{ij} h_m(A) &=  \sum_{k=0}^m \del_{ij}x_i^k \cdot h_{m-k}(A \setminus \{i\})\\
	&= \sum_{k=1}^m \left(\sum_{l=0}^{k-1} x_i^l x_j^{k-l-1}\right)\cdot h_{m-k}(A \setminus \{i\})\\
	&= h_{m-1}(A \cup \{j\}).
	\end{align*}
	The second case then follows by relation~\eqref{negative}, and the third case follows since $h_m(A)$ will be symmetric in $x_i$ and $x_j$.
\end{proof}

We can then use the expansion given in Theorem~\ref{thm:tdel} to give an explicit description of the action of $\tdel_{w/v}$ on $e_m^{(k)}$ and $h_m^{(k)}$. In order to state the result, we will need the following proposition.

\begin{prop} \label{prop:chain}
	Let $v,w \in S_n$ and fix $1 \leq k < n$. Then, up to reordering commuting transpositions, there is at most one way to write $w = v s_{a_1b_1}\cdots s_{a_tb_t}$ with $a_i \leq k < b_i$ for all $i$ such that $a_1, \dots, a_t$ are distinct (or alternatively, such that $b_1, \dots, b_t$ are distinct). 
	This is possible if and only if each nontrivial cycle of $v^{-1}w$ contains exactly one element larger than $k$ (resp.\ at most $k$).
\end{prop}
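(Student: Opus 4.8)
The plan is to analyze how multiplying $v$ on the right by a transposition $s_{ab}$ with $a \le k < b$ affects the cycle structure of $v^{-1}w$, and then run an induction on $t$. Write $u = v^{-1}w$; right-multiplying $v$ by $s_{ab}$ replaces $u$ by $s_{ab}u$ on the left, or equivalently, I can track the product $s_{a_1b_1}\cdots s_{a_tb_t} = v^{-1}w = u$ directly, where each transposition has one ``small'' index ($\le k$) and one ``large'' index ($>k$). The key structural observation is: if $\sigma$ is any product of transpositions each of which pairs a small index with a large index, then each nontrivial cycle of $\sigma$ must alternate --- no, more precisely, I will show by induction on the number of factors that in any such product, each nontrivial cycle contains at least one large element, and that the ``distinct $a_i$'' condition forces each cycle to contain \emph{exactly} one large element (dually for distinct $b_i$).

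First I would prove the ``only if'' direction and uniqueness together. Suppose $u = s_{a_1b_1}\cdots s_{a_tb_t}$ with the $a_i$ distinct and each $a_i \le k < b_i$. Consider the graph $G$ on vertex set $[n]$ with an edge $\{a_i, b_i\}$ for each $i$; since the $a_i$ are distinct, each small vertex has degree at most one in the part of $G$ it touches via being a left endpoint, but large vertices may repeat. I claim $G$ restricted to the support of $u$ has the property that its connected components are exactly the nontrivial cycles of $u$, and each component is a tree (in fact each component has exactly one large vertex). The cleanest route: induct on $t$. Removing the last transposition $s_{a_tb_t}$, by induction $u s_{a_tb_t} = s_{a_1b_1}\cdots s_{a_{t-1}b_{t-1}}$ has each nontrivial cycle containing exactly one large element, and the small left-endpoints $a_1,\dots,a_{t-1}$ are distinct and disjoint from $a_t$ (wait --- I need to be careful: I should peel off a transposition from whichever end makes the induction clean, and I should think of the product as building $u$ one transposition at a time, checking that at each stage the invariant ``each nontrivial cycle has exactly one large element and the small elements used so far are exactly the small elements in nontrivial cycles'' is maintained). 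Multiplying by $s_{a_tb_t}$ with $a_t$ a fresh small index: if $a_t$ and $b_t$ are in different cycles of the current permutation, this merges them, and the merged cycle has exactly one large element only if --- hmm, this needs the invariant that before multiplying, $b_t$ lies in a cycle whose unique large element is $b_t$ itself, OR $a_t$ is a fixed point. I will have to set up the induction statement carefully so that it is actually preserved; the natural invariant is that at each partial product $v s_{a_1b_1}\cdots s_{a_jb_j}$, the permutation $s_{a_1 b_1} \cdots s_{a_j b_j}$ has each nontrivial cycle containing exactly one large index, and $\{a_1,\dots,a_j\}$ equals the set of small indices in its support.

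Then for the ``if'' direction I would do the reverse construction: given a target permutation $u = v^{-1}w$ each of whose nontrivial cycles has exactly one large element, I explicitly exhibit the factorization. A cycle with unique large element $b$ and small elements $a_{(1)}, \dots, a_{(r)}$ (listed in cycle order starting after $b$) factors as a product of transpositions $\{a_{(i)}, b\}$ or $\{a_{(i)}, a_{(i+1)}\}$ --- but all factors must pair small with large, so each factor must involve $b$; a cycle $(b\ a_{(1)}\ a_{(2)}\ \cdots\ a_{(r)})$ equals $s_{b a_{(1)}} s_{b a_{(2)}} \cdots s_{b a_{(r)}}$ (with the appropriate convention/order), which uses each small index exactly once as $a_i$. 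Concatenating over all cycles (which commute, being on disjoint supports) gives the factorization, and distinctness of the $a_i$ is immediate. For uniqueness up to reordering commuting transpositions, I note that transpositions from different cycles commute, so it suffices to prove uniqueness within a single cycle; there, the requirement that every factor contains the unique large index $b$ pins down the multiset of factors as $\{s_{b a} : a \text{ a small element of the cycle}\}$, and then the order is forced (up to nothing, since within one cycle no two of these commute) by the requirement that the product be the given cycle. The dual statement (distinct $b_i$, each cycle has at most $k$ --- i.e.\ at most one small element, equivalently at most $n-k$... wait, ``at most $k$'' in the proposition means at most one large element is replaced by: each cycle contains at most one element $\le k$) follows by the symmetric argument with the roles of small and large swapped, or formally by conjugating by $w_0$.

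\textbf{Main obstacle.} The delicate point is getting the inductive invariant exactly right so that it is genuinely preserved under appending one transposition $s_{a_tb_t}$ --- in particular handling the case analysis of whether $a_t$, $b_t$ lie in the same or different cycles of the current partial product, and confirming that the ``distinct left endpoints'' hypothesis is precisely what rules out the bad merge/split behaviors. Once the invariant is correctly stated, each individual step is a short computation with a single transposition acting on a permutation, but pinning down the statement is where the real content lies. I also need to double-check the degenerate cases (fixed points of $u$, transpositions $s_{ab}$ where $a$ or $b$ is outside the support so far) so that the count ``each nontrivial cycle, exactly one large element'' is matched up correctly with $\ell(w) - \ell(v) = $ (number of transpositions) $= \sum (\text{cycle length} - 1)$ over nontrivial cycles, which is automatically the number of small elements in the support --- reassuringly consistent with the distinctness claim.
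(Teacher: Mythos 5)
Your proposal takes a genuinely different route from the paper's. The paper's proof is a direct normalization: since distinct $a_i$ imply that $s_{a_ib_i}$ and $s_{a_jb_j}$ commute whenever $b_i \neq b_j$ (the small and large indices live in disjoint ranges), one may sort so that $b_1 \leq \cdots \leq b_t$; the maximal block with $b_1 = \cdots = b_r = b$ then multiplies to the cycle $(b\ a_r\ \cdots\ a_1)$, which is disjoint from every remaining transposition, hence must be a cycle of $v^{-1}w$, and the $a_i$ in the block are read off the cycle order. Iterating over the distinct values of $b$ gives both uniqueness and the cycle characterization in one pass, with no induction. Your proposal instead builds the product one transposition at a time and carries an invariant. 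The worry you flag --- needing ``$b_t$ lies in a cycle whose unique large element is $b_t$ itself, OR $a_t$ is a fixed point'' --- is in fact dispatched by your own invariant (b): if $\{a_1,\dots,a_j\}$ equals the set of small indices in the support of the partial product and the $a_i$ are distinct, then $a_{j+1}$ is automatically a fixed point; and since every nontrivial cycle of the partial product has exactly one large element and $b_{j+1} > k$, the index $b_{j+1}$ is forced to be either a fixed point or the unique large element of its cycle. So the case analysis you were worried about collapses, and the invariant is preserved. This also shows that cycles of the partial products only grow by absorbing fresh small fixed points and never merge, which is exactly what your uniqueness argument needs: the partner $b_i$ of each $a_i$ is forced to be the unique large element of the cycle of $v^{-1}w$ containing $a_i$, and the order within a cycle is then forced. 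Both approaches ultimately rest on the same observation (all transpositions building one cycle share its unique large element); the paper's sort-and-group argument reaches it in one step, while yours is longer and more mechanical but equally correct once the invariant is pinned down.
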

\begin{proof}
	Note that if the $a_i$ are distinct, then $s_{a_ib_i}$ and $s_{a_jb_j}$ will commute if $b_i \neq b_j$. Therefore it suffices to show that there is at most one way to write $v^{-1}w = s_{a_1b_1} \cdots s_{a_tb_t}$ with the $a_i$ distinct, $b_1 \leq b_2 \leq \cdots \leq b_t$, and $a_i \leq k < b_i$ for all $i$.
	
	If $b_1 = \cdots = b_r = b$ and $b_{r+1} \neq b$, then $s_{a_1b}s_{a_2b} \cdots s_{a_rb}$ is the cycle $(b\; a_r\; a_{r-1} \; \cdots \; a_1)$. Since the other transpositions cannot involve any of these elements, this cycle must occur in $v^{-1}w$. Note that $b$ is the only element in this cycle larger than $k$, and $a_1, \dots, a_r$ are then uniquely determined by the order of the elements in the cycle. Applying the same logic to the remaining distinct values of $b_i$ and the remaining cycles of $v^{-1}w$ gives the desired result.
	%
\end{proof}

If the expression in Proposition~\ref{prop:chain} with $a_1, \dots, a_t$ distinct exists, and moreover
\[\ell(v) < \ell(vs_{a_1b_1}) < \ell(vs_{a_1b_1}s_{a_2b_2}) < \cdots < \ell(vs_{a_1b_1}s_{a_2b_2} \cdots s_{a_tb_t}) = \ell(w), \tag{$\dagger$}\]
then we will write $A_k(v,w) = \{a_1, \dots, a_t\}$, otherwise we will say that $A_k(v,w)$ does not exist. Equivalently, by the proof of Proposition~\ref{prop:chain}, $A_k(v,w)$ exists if and only if each nontrivial cycle of $v^{-1}w$ has exactly one element larger than $k$, and if for each such cycle $(b\; a_r\; a_{r-1}\; \cdots \; a_1)$ with $b>k$, we have
\[v(b) > v(a_1) > v(a_2) > \cdots > v(a_r),\tag{$\ddagger$}\]
or equivalently,
\[w(a_1) > w(a_2) > \cdots > w(a_r) > w(b).\]

Note that condition $(\ddagger)$ and hence condition $(\dagger)$ are unchanged upon reordering commuting transpositions.

\begin{remark} \label{rmk:ak}
	If $A_k(v,w)$ exists, then it will consist of all elements $a \leq k$ in a nontrivial cycle of $v^{-1}w$, that is, such that $v^{-1}w(a)\neq a$. Hence $[k] \setminus A_k(v,w)$ consists of all $a \leq k$ such that $v(a) = w(a)$.
\end{remark}

Similarly, if the expression in Proposition~\ref{prop:chain} with $b_1, \dots, b_t$ distinct exists along with $(\dagger)$, then we will write $B_k(v,w) = \{b_1, \dots, b_t\}$, otherwise $B_k(v,w)$ will not exist.


We can now state the action of $\tdel_{w/v}$ on $e_m^{(k)}$ and $h_m^{(k)}$.

\begin{thm} \label{thm:tdele}
Let $v,w \in S_n$. Then 
\begin{align*}
\tdel_{w/v} e_m^{(k)} &= \begin{cases}
e_{m-|A|}([k] \setminus A) & \text{if $A=A_k(v,w)$ exists,}\\
0&\text{otherwise};
\end{cases}\\
\tdel_{w/v} h_m^{(k)} &= \begin{cases}
h_{m-|B|}([k] \cup B) & \text{if $B=B_k(v,w)$ exists,}\\
0&\text{otherwise}.
\end{cases}
\end{align*}
\end{thm}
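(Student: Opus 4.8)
The plan is to prove both formulas by induction on $\ell(w_0 v)$, running the induction in parallel with the recursion from Lemma~\ref{lemma:tdel}, so that the combinatorial identity we must verify mirrors exactly the recursion $\tdel_{w/v} = \tdel_{s_i w/s_i v} + \del_{\alpha\beta}\tdel_{w/s_i v}$. I would do the $e_m^{(k)}$ case in detail and remark that the $h_m^{(k)}$ case is dual. For the base case $\ell(w_0 v) = 0$ we have $v = w_0$, and $\tdel_{w/v}$ is nonzero only for $w = w_0$, where it equals $1$; correspondingly $v^{-1}w = \mathrm{id}$ has only trivial cycles, so $A_k(v,w) = \emptyset$ exists and $e_{m}^{(k)}$ is returned unchanged, matching.

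**The inductive step.** For the inductive step, fix a reduced expression for $w_0 v$, let $i = n - i_1$ as in the proof of Theorem~\ref{thm:tdel}, so $\ell(s_i v) > \ell(v)$, and apply Lemma~\ref{lemma:tdel}: $\tdel_{w/v}e_m^{(k)} = \tdel_{s_i w/s_i v}e_m^{(k)} + \del_{\alpha\beta}\bigl(\tdel_{w/s_i v}e_m^{(k)}\bigr)$ with $\alpha = v^{-1}(i) < \beta = v^{-1}(i+1)$. Both $\tdel_{s_iw/s_iv}$ and $\tdel_{w/s_iv}$ have smaller $\ell(w_0\cdot)$-parameter in the relevant slot, so by induction I may substitute the claimed closed forms, obtaining $e_{m-|A'|}([k]\setminus A')$ for $A' = A_k(s_iv, s_iw)$ (when it exists) plus $\del_{\alpha\beta}$ applied to $e_{m-|A''|}([k]\setminus A'')$ for $A'' = A_k(s_iv, w)$ (when it exists). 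Then I would invoke Lemma~\ref{lemma:del-e} to evaluate $\del_{\alpha\beta}e_{m-|A''|}([k]\setminus A'')$: the result is $-e_{m-|A''|-1}([k]\setminus(A''\cup\{\beta\}))$ if $\alpha\notin[k]\setminus A''$ and $\beta\in[k]\setminus A''$ — but note $\alpha, \beta$ here play a subtle role, and I should be careful: it is $\del_{\alpha\beta}$ with $\alpha<\beta$, so the nonzero cases are $\alpha \in [k]\setminus A''$, $\beta\notin[k]\setminus A''$ giving $e_{m-|A''|-1}(([k]\setminus A'')\setminus\{\alpha\})$, and the reverse giving a minus sign. The crux is then a purely combinatorial bookkeeping lemma: relating the three sets $A_k(v,w)$, $A_k(s_iv,s_iw)$, $A_k(s_iv,w)$ and the element $\alpha$, using that $(s_iv)^{-1}(s_iw) = v^{-1}w$ while $(s_iv)^{-1}w = v^{-1}s_iw$, i.e. passing from $v$ to $s_iv$ changes the conjugacy-type data in a controlled way and passing from $w$ to $s_iw$ left-multiplies $v^{-1}w$ by $v^{-1}s_iw\cdot(v^{-1}w)^{-1}$. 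Concretely, $v^{-1}w$ and $v^{-1}s_iw = (v^{-1}w)\cdot(w^{-1}s_iw)$ differ by the transposition $s_{w^{-1}(i),w^{-1}(i+1)}$, i.e. $s_{\alpha'\beta'}$ where $\alpha' = w^{-1}(i)$, $\beta' = w^{-1}(i+1)$; I would track how multiplying $v^{-1}w$ by this transposition either splits a cycle, merges two cycles, or toggles whether the "one element $>k$" condition holds, and simultaneously how condition $(\ddagger)$ behaves. Matching the $\pm$ signs and the index shifts from Lemma~\ref{lemma:del-e} against whether $\alpha$ enters or leaves $A_k$ should produce exactly one surviving term equal to $e_{m-|A|}([k]\setminus A)$ with $A = A_k(v,w)$, the $-1$ from the "otherwise $0$" case of the divided difference cancelling against the case where $A_k(v,w)$ fails to exist.

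**Main obstacle.** The main obstacle is precisely this last combinatorial case analysis: one must carefully enumerate how left-multiplication of $v^{-1}w$ by the transposition $s_{\alpha'\beta'}$ interacts with the partition of the cycle structure according to the threshold $k$ and with the chain condition $(\ddagger)$, and check that in every case the two inductive contributions conspire — often via a cancellation between a "valid chain" term and an "invalid, hence $0$" term — to yield the single claimed monomial-symmetric polynomial. An alternative, possibly cleaner route that I would also consider is to bypass the recursion and instead substitute the positive formula of Theorem~\ref{thm:tdel} directly: expand $\tdel_{w/v} = \sum_J \prod_{j\notin J}\del_{\alpha_j\beta_j}$ and apply each $\del_{\alpha_j\beta_j}$ to $e_m^{(k)}$ via Lemma~\ref{lemma:del-e}, reading a chain of these operators off the wiring diagram of a reduced word for $w_0 v$; here each step of $\del_{\alpha_j\beta_j}$ either deletes an element of $[k]$-ish index set or kills the term, and one checks that the nonzero contributions are in bijection with the chains $(\dagger)$ defining $A_k(v,w)$. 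Either way, the bijective/combinatorial identification of the surviving terms with the chain condition is the substantive content; the algebra is routine given Lemmas~\ref{lemma:tdel} and~\ref{lemma:del-e}.
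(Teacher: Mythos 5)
Your proposal identifies the right objects but does not actually close the argument, and I think the gap is substantive rather than cosmetic. Your primary route is induction on $\ell(w_0v)$ via Lemma~\ref{lemma:tdel}, which is a genuinely different strategy from the paper's. But you explicitly stop at the step you yourself call ``the main obstacle'': relating $A_k(v,w)$, $A_k(s_iv,s_iw)$, $A_k(s_iv,w)$ and tracking condition $(\ddagger)$ under the transposition. That case analysis \emph{is} the combinatorial content of the theorem; without it there is no proof. Moreover there is a specific wrinkle you wave at but do not resolve: when $\beta = v^{-1}(i+1) \le k$, the term $\del_{\alpha\beta}e_{m'}([k]\setminus A'')$ can carry a minus sign (the case $\alpha\notin[k]\setminus A''$, $\beta\in[k]\setminus A''$ of Lemma~\ref{lemma:del-e}), and your suggestion that this ``cancels against the case where $A_k(v,w)$ fails to exist'' cannot work as stated --- non-existence contributes $0$, and $0$ does not cancel a negative term. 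One would have to rule out that sign case entirely under the hypothesis $\ell(s_iv)>\ell(v)$, or exhibit a genuine cancellation against the $\tdel_{s_iw/s_iv}$ term; you do neither.

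Your alternative sketch --- expand $\tdel_{w/v}$ via Theorem~\ref{thm:tdel} and push each $\del_{\alpha_j\beta_j}$ through $e_m^{(k)}$ --- is in fact the paper's approach, but as written it misses the device that makes it go through cleanly. The paper does not take an arbitrary reduced word for $w_0v$: it factors $w_0v = v'\cdot u$ with $v'$ a minimal-length left coset representative modulo $S_k\times S_{n-k}$ and $u\in S_k\times S_{n-k}$. With this choice, every $(\alpha_j,\beta_j)$ with $j\le p$ straddles $k$ (so $\del_{\alpha_j\beta_j}$ simply deletes $x_{\alpha_j}$, always with coefficient $+1$), while every $(\alpha_j,\beta_j)$ with $j>p$ has both indices on the same side of $k$ (so $\del_{\alpha_j\beta_j}e_m^{(k)}=0$, forcing $\{p+1,\dots,\ell\}\subset J$). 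This is exactly what eliminates negative signs, forces the $\alpha_j$ for $j\notin J$ to be distinct, and lets one identify the unique surviving $J$ with the chain data $(\dagger)$--$(\ddagger)$ defining $A_k(v,w)$. With a generic reduced word, divided differences with $\alpha_j<\beta_j\le k$ can act nontrivially mid-chain (once some index has been deleted) and introduce signs, and your sketch gives no mechanism to control them. So: right raw ingredients (Lemma~\ref{lemma:del-e}, Theorem~\ref{thm:tdel}, the chain conditions), but the organizing idea --- the parabolic factorization of the reduced word --- is missing, and without it neither of your two routes reaches the conclusion.
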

\begin{proof}
We prove the claim for $e_m^{(k)}$, as the proof for $h_m^{(k)}$ is similar.

Consider the parabolic subgroup $S_k \times S_{n-k} \subset S_n$, where $S_k$ acts on the first $k$ letters and $S_{n-k}$ acts on the last $n-k$ letters. Choose a reduced word \[w_0 v = s_{i_1}s_{i_2} \cdots s_{i_p} \cdot s_{i_{p+1}}s_{i_{p+2}} \cdots s_{i_\ell} = v'\cdot u,\] where $k \neq i_{p+1}, \dots, i_\ell$ (so that $u=s_{i_{p+1}} \cdots s_{i_\ell} \in S_k \times S_{n-k}$) and $v'=s_{i_1}\cdots s_{i_p}$ is a minimal length (left) coset representative of $S_k \times S_{n-k}$. Then for $q \leq p$, $\alpha_q \leq k < \beta_q$, while for $q > p$, either $\alpha_q < \beta_q \leq k$ or $k < \alpha_q < \beta_q$.

It follows from Lemma~\ref{lemma:del-e} that $\del_{\alpha_q\beta_q} e_m^{(k)} = 0$ if $q > p$. Thus, in Theorem~\ref{thm:tdel}, the contribution of a subset $J$ to $\tdel_{w/v}e_m^{(k)}$, namely
$\prod_{j \notin J} \del_{\alpha_j\beta_j} e_m^{(k)}$,
 will be zero unless $\{p+1, \dots, \ell\} \subset J$. Moreover, by Lemma~\ref{lemma:del-e}, each application of $\del_{\alpha_j\beta_j}$ for $j \leq p$ decreases the degree by $1$ and removes $x_{\alpha_j}$ from the elementary symmetric function, so we must also have that the $\alpha_j$ for $j \notin J$ are distinct. Thus $J$ contributes $e_{m-|A'|}([k] \setminus A')$, where $A' = \{\alpha_j \mid j \notin J\}$. We therefore need only show that there is a subset $J$ that gives a nonzero contribution if and only if $A_k(v,w)$ exists, in which case $J$ is unique and $A' = A_k(v,w)$.
 
Suppose $A_k(v,w)$ exists, and consider any cycle $(b \; a_r\; \cdots \; a_1) = s_{a_1b}s_{a_2b} \cdots s_{a_rb}$ of $v^{-1}w$. Then $(\ddagger)$ implies $w_0v(b) < w_0v(a_1) < \cdots < w_0v(a_r)$. Since $a_1, \dots, a_r < b$, it follows that $(a_1, b)$, \dots, $(a_r, b)$ must occur as some $(\alpha_j, \beta_j)$ coming from the $v'$ part of $w_0v = v'u$ (that is, with $j \leq p$). Moreover, since $v'$ is a minimal coset representative of $S_k \times S_{n-k}$, it must preserve the order of the $a_i$ as they are all at most $k$. It follows that $(a_1, b)$, \dots, $(a_r,b)$ must occur in that order. Now let $J$ be the set of all $j$ such that $(\alpha_j, \beta_j) \neq (a_i, b)$ for any $i$ and any cycle of $v^{-1}w$. By Proposition~\ref{prop:factor}, \[\prod_{j \in J} s_{i_j} = w_0v \cdot \prod_{j \notin J} s_{\alpha_j\beta_j} = w_0v \cdot v^{-1}w = w_0w.\]
It follows that this set $J$ gives a nonzero contribution to $\tdel_{w/v}e_m^{(k)}$ and that $A' = A_k(v,w)$. Since the order in which the $s_{\alpha_j\beta_j}$ appear is determined by the reduced word chosen for $w_0v$, none of the reorderings of $\prod_{j \notin J}s_{\alpha_j \beta_j}$ can occur, so $J$ is unique. The converse direction is similar.
\end{proof}

As a corollary, we can deduce the Pieri rule for Schubert polynomials.

\begin{cor} \label{cor:pieri}
	Let $v,w \in S_\infty$ with $m = \ell(w) - \ell(v)$. The coefficient of $\mathfrak S_w$ in the Schubert expansion of $\mathfrak S_v \cdot e_m^{(k)}$ is $1$ if $A=A_k(v,w)$ exists with $|A|=m$, and $0$ otherwise.
	
	Similarly, the coefficient of $\mathfrak S_w$ in the Schubert expansion of $\mathfrak S_v \cdot h_m^{(k)}$ is $1$ if $B=B_k(v,w)$ exists with $|B|=m$, and $0$ otherwise.
\end{cor}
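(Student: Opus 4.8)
The plan is to derive Corollary~\ref{cor:pieri} by combining Proposition~\ref{prop:cuvw} with the explicit action of $\tdel_{w/v}$ on $e_m^{(k)}$ and $h_m^{(k)}$ computed in Theorem~\ref{thm:tdele}. Recall that $\tdel_{w/v}$ is an operator whose maximum-degree part is precisely the skew divided difference operator $\del_{w/v}$. So the first step is to observe that when $\ell(v)+m = \ell(w)$, the polynomial $e_m^{(k)}$ has degree $m$, and the operator $\del_{w/v}$ lowers degree by exactly $\ell(w)-\ell(v) = m$; hence applying $\del_{w/v}$ to $e_m^{(k)}$ produces a scalar (a degree-$0$ polynomial), and moreover $\del_{w/v}e_m^{(k)}$ equals $\tdel_{w/v}e_m^{(k)}$ in this degree-$0$ part since all lower-degree terms of $\tdel_{w/v}$ (those not part of $\del_{w/v}$) would kill $e_m^{(k)}$ for degree reasons — the degree-$0$ output can only come from the top-degree part of the operator. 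So I would first record that $\del_{w/v}\,e_m^{(k)} = \tdel_{w/v}\,e_m^{(k)}$ whenever $\ell(w)-\ell(v) = m$.

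Next I would invoke Proposition~\ref{prop:cuvw}: taking $u$ to be the permutation with $\schub_u = e_m^{(k)}$ — namely $u$ is the dominant permutation (or the relevant cycle/Grassmannian permutation) whose Schubert polynomial is the elementary symmetric polynomial $e_m^{(k)}$, with $\ell(u) = m$ — we get that the structure constant $c_{uv}^w$, i.e.\ the coefficient of $\schub_w$ in $\schub_v \cdot e_m^{(k)} = \schub_v\cdot\schub_u$, equals $\del_{w/v}\,\schub_u = \del_{w/v}\,e_m^{(k)}$, provided $\ell(u)+\ell(v) = \ell(w)$, i.e.\ $m = \ell(w)-\ell(v)$; and the coefficient is $0$ for degree reasons otherwise. (I should be slightly careful that Proposition~\ref{prop:cuvw} is stated in $S_n$ while the corollary is in $S_\infty$, but by the stability of Schubert polynomials this is harmless: choose $n$ large enough to contain $v$, $w$, $k$, and the relevant $u$.) Combining with the previous paragraph, $c_{uv}^w = \tdel_{w/v}\,e_m^{(k)}$ when $m = \ell(w)-\ell(v)$.

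Finally I would apply Theorem~\ref{thm:tdele}: $\tdel_{w/v}\,e_m^{(k)}$ equals $e_{m-|A|}([k]\setminus A)$ if $A = A_k(v,w)$ exists, and $0$ otherwise. For this to be a nonzero scalar we need $e_{m-|A|}([k]\setminus A)$ to be a degree-$0$ polynomial, i.e.\ $m = |A|$, in which case it equals $e_0([k]\setminus A) = 1$. (If $m < |A|$ it is $0$ by the convention $e_{<0} = 0$; if $m > |A|$ it is a polynomial of positive degree, but this case cannot arise since we already fixed $m = \ell(w)-\ell(v)$ and, when $A_k(v,w)$ exists, $|A|$ is forced by condition~$(\dagger)$ to equal $\ell(w)-\ell(v)$.) So the coefficient is $1$ exactly when $A_k(v,w)$ exists with $|A| = m$ and $0$ otherwise, which is the claim. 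The argument for $h_m^{(k)}$ is identical, using $\schub_u = h_m^{(k)}$ for the appropriate permutation $u$ and the $B_k(v,w)$ half of Theorem~\ref{thm:tdele}.

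The only genuine subtlety — and the step I would be most careful about — is the identification $\del_{w/v}\,e_m^{(k)} = \tdel_{w/v}\,e_m^{(k)}$ in the relevant degree: one must check that no lower-degree piece of $\tdel_{w/v}$ contributes to the degree-$0$ output, which follows because each such piece, being a product of fewer than $\ell(w)-\ell(v)$ operators $\del_{\alpha_j\beta_j}$, lowers degree by strictly less than $m$ and so sends the degree-$m$ polynomial $e_m^{(k)}$ to something of strictly positive degree. A clean way to phrase this uniformly is: $\tdel_{w/v}\,e_m^{(k)}$ is a polynomial whose degree-$0$ part is $\del_{w/v}\,e_m^{(k)}$, and Theorem~\ref{thm:tdele} shows this polynomial is itself homogeneous (equal to $e_{m-|A|}([k]\setminus A)$), so when $m = \ell(w)-\ell(v)$ it simply \emph{is} its degree-$0$ part. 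Everything else is bookkeeping about which permutation $u$ has $\schub_u$ equal to $e_m^{(k)}$ or $h_m^{(k)}$ and an appeal to stability to pass between $S_n$ and $S_\infty$.
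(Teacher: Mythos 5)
Your overall route is the same as the paper's: reduce the Pieri coefficient to $\del_{w/v}\,e_m^{(k)}$ via Proposition~\ref{prop:cuvw}, then relate $\del_{w/v}$ to $\tdel_{w/v}$ and invoke Theorem~\ref{thm:tdele}. But one step in the middle is wrong. You claim that when $A_k(v,w)$ exists, condition~$(\dagger)$ \emph{forces} $|A| = \ell(w)-\ell(v)$, and on that basis you write $\del_{w/v}\,e_m^{(k)} = \tdel_{w/v}\,e_m^{(k)}$ as an equality of polynomials. This is false: $(\dagger)$ only requires the length to strictly increase at each of the $|A|$ steps, so it yields $|A| \leq \ell(w)-\ell(v)$, and strict inequality genuinely occurs. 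For instance, with $v=\mathrm{id}$, $w=s_{13}=321$, and $k=1$, the set $A_1(\mathrm{id},w)=\{1\}$ exists (the chain is $\mathrm{id} \lessdot s_{13}$ with one step jumping the length from $0$ to $3$), so $|A|=1$ while $\ell(w)-\ell(v)=3$. In such cases $\tdel_{w/v}\,e_m^{(k)} = e_{m-|A|}([k]\setminus A)$ is a homogeneous polynomial of \emph{positive} degree $m-|A|$, so it is certainly not equal to the scalar $\del_{w/v}\,e_m^{(k)}$, and the identity you "record" at the start fails.

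The correct statement, which is what you need and which you do gesture at in your last paragraph, is only that $\del_{w/v}\,e_m^{(k)}$ equals the \emph{constant term} of $\tdel_{w/v}\,e_m^{(k)}$. This is because $\del_{w/v}$ is the top-degree part of $\tdel_{w/v}$, each lower-degree piece being a product of fewer than $\ell(w)-\ell(v)=m$ operators $\del_{\alpha_j\beta_j}$ and hence landing in strictly positive degree when applied to the degree-$m$ polynomial $e_m^{(k)}$. From there the argument closes cleanly: by Theorem~\ref{thm:tdele}, the constant term of $\tdel_{w/v}\,e_m^{(k)} = e_{m-|A|}([k]\setminus A)$ is $1$ if $A$ exists with $|A|=m$, and is $0$ otherwise (including the case $|A|<m$, which you erroneously excluded, where the output is homogeneous of positive degree and thus has zero constant term; note $|A|>m$ cannot occur since $(\dagger)$ gives $|A|\le m$). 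So drop the false claim that $|A|$ is determined, replace the purported equality of polynomials with an equality of constant terms, and your proof matches the paper's.
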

\begin{proof}
	By Proposition~\ref{prop:cuvw}, the coefficient of $\mathfrak S_w$ in $\mathfrak S_v \cdot e_m^{(k)}$ is $\del_{w/v} e_m^{(k)}$. But $\del_{w/v}$ is the maximum degree part of $\tdel_{w/v}$, so this coefficient is just the constant term of $\tdel_{w/v} e_m^{(k)}$. The result then follows from Theorem~\ref{thm:tdele} since we must have $|A|=m$ to get a nonzero constant term. The proof for $h_m^{(k)}$ is similar.
\end{proof}
Note that if $|A|=\ell(w)-\ell(v)$, then we must have that in $(\dagger)$, the length goes up by exactly $1$ at each step. This is then easily seen to be equivalent to the phrasing of the Pieri rule in Theorem~\ref{thm:pieri}.

\section{Twisted Schubert polynomials}
In this section, we will apply the twisted operators to define twisted Schubert polynomials. We will then use the results of the previous section to prove that these polynomials are monomial positive as well as give a combinatorial interpretation for their coefficients. We will also define double versions of these polynomials and prove a positivity property for them and their localizations.

\subsection{Definition}

Define \emph{twisted Schubert polynomials} $\tschub_w$ for $w \in S_n$ by 
\[\tschub_w = T_{w^{-1}w_0} (x_1^{n-1} x_2^{n-2} \cdots x_{n-1}).\]
Hence
$\tschub_{w_0} = x_1^{n-1}x_2^{n-2} \cdots x_{n-1}$, and $\tschub_{ws_i} = T_i \tschub_w$. (This holds for all $i$ and $w$ with no restrictions since the $T_i$ satisfy the Coxeter relations.)

It is important to note that $\tschub_w$ does not have the same stability property as $\schub_w$, that is, the value of $\tschub_w$ depends on $n$.

\begin{ex}
	Using the recursion above, we can calculate $\tschub_w$ for all $w \in S_3$.
	\begin{align*}
	\tschub_{s_1s_2s_1} &= x_1^2x_2\\
	\tschub_{s_1s_2} 
	&= x_1x_2^2 + x_1x_2\\
	\tschub_{s_2s_1}
	&= x_1^2x_3 + x_1^2\\
	\tschub_{s_1} 
	&= x_1x_3^2 + x_1x_2 + 2x_1x_3 + x_1\\
	\tschub_{s_2} 
	&=x_2^2x_3 + x_1x_3 + x_2^2 + x_2x_3 + x_1 + x_2\\
	\tschub_{id} 
	&= x_2x_3^2 + x_1x_2 + 2x_2x_3 + x_3^2 + x_2 + 2x_3 + 1
	\end{align*}
	Note that these are all polynomials with positive coefficients. However, the calculation to obtain these polynomials is not inherently positive. For instance, $\tschub_{id} = T_2 \tschub_{s_2} = (s_2 + \del_2)\tschub_{s_2}$, but 
	\[\del_2 \tschub_{s_2} = x_2x_3 -x_1 + x_2 + x_3 + 1\]
	has a negative coefficient.
\end{ex}

\subsection{Monomial positivity}

Using the results of the previous section, it is straightforward to prove that the $\tschub_w$ are all monomial positive.

\begin{thm} \label{thm:positive}
For $w \in S_n$, 
\[\tschub_w = \sum_{u_1, \dots, u_{n-1}} \prod_{i=1}^{n-1} \prod_{\substack{j \leq i\\j \notin A_i}} x_{u_i(j)},\]
where $u_n = w^{-1}w_0$, and the sum ranges over all sequences $u_1, u_2, \dots, u_{n-1}$ such that $A_i = A_i(u_i, u_{i+1})$ exists.

In particular, the polynomial $\tschub_w$ has nonnegative coefficients when expressed in the monomial basis.
\end{thm}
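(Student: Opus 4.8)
The plan is to iterate the Leibniz rule for the twisted operators (Proposition~\ref{prop:leibniz}) together with the explicit action of $\tdel_{w/v}$ on elementary symmetric polynomials (Theorem~\ref{thm:tdele}), exploiting the fact that the starting polynomial $x_1^{n-1}x_2^{n-2}\cdots x_{n-1}$ factors as a product of $e_1$-type pieces, one in each set of initial variables. Concretely, write $x_1^{n-1}x_2^{n-2}\cdots x_{n-1} = \prod_{i=1}^{n-1} p_i$ where $p_i = x_1 x_2 \cdots x_i = e_i^{(i)}$ (so $p_i$ is the top elementary symmetric polynomial in $x_1, \dots, x_i$), and recall that we must apply $T_{u_n}$ with $u_n = w^{-1}w_0$.

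First I would establish, by induction on the number of factors, the general identity
\[
T_u\Bigl(\prod_{i=1}^{n-1} p_i\Bigr) = \sum_{u = u_n, u_{n-1}, \dots, u_1} \prod_{i=1}^{n-1} u_i\bigl(\tdel_{u_{i+1}/u_i}\, p_i\bigr),
\]
where the sum is over all chains $u = u_n, u_{n-1}, \dots, u_1$ in $S_n$; this is just the iterated form of Proposition~\ref{prop:leibniz}(b), peeling off one factor at a time from the right. Then I would feed in Theorem~\ref{thm:tdele}: since $p_i = e_i^{(i)}$ is the \emph{maximal} elementary symmetric polynomial in $x_1, \dots, x_i$ (i.e.\ $m = k = i$), the theorem gives $\tdel_{u_{i+1}/u_i}\, e_i^{(i)} = e_{i - |A_i|}([i] \setminus A_i)$ when $A_i = A_i(u_i, u_{i+1})$ exists, and $0$ otherwise. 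But $e_{i-|A_i|}([i]\setminus A_i)$ is the top elementary symmetric polynomial in the $i - |A_i|$ variables indexed by $[i] \setminus A_i$, hence equals the single monomial $\prod_{j \in [i], \, j \notin A_i} x_j$. Applying the permutation $u_i$ then yields $\prod_{j \le i,\, j \notin A_i} x_{u_i(j)}$, which is exactly the factor appearing in the statement. Substituting $u = w^{-1}w_0$ produces the claimed formula for $\tschub_w$, and nonnegativity is immediate since every term is a genuine monomial with coefficient $1$.

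The routine-but-necessary pieces are: checking that the iterated Leibniz expansion really does produce a sum over all chains with the displayed product structure (a clean induction, with the base case $T_u(p_1) = T_u(x_1)$ handled directly), and noting that terms with some $A_i(u_i,u_{i+1})$ nonexistent simply drop out, so the sum in the theorem is over exactly the surviving chains. I expect the main conceptual point — and the only place any real care is needed — to be the identification of $\tdel_{u_{i+1}/u_i}$ applied to the \emph{leftmost-but-one} factor: one must make sure that in the nested Leibniz rule the operator hitting $p_i$ is indeed $\tdel_{u_{i+1}/u_i}$ (with the correct indices and the correct outer permutation $u_i$) rather than some other skew operator, and that the bookkeeping of which permutation acts on which factor matches the left-to-right conventions fixed in Proposition~\ref{prop:leibniz}. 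Once the indexing is pinned down, everything else follows formally from Theorems~\ref{thm:tdel} and~\ref{thm:tdele}.
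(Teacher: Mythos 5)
Your proposal is correct and matches the paper's proof essentially verbatim: factor $\tschub_{w_0}=x_1^{n-1}\cdots x_{n-1}$ as $\prod_{i=1}^{n-1} e_i^{(i)}$, iterate Proposition~\ref{prop:leibniz}(b) to get a sum over chains $u_1,\dots,u_n$ of products $\prod_i u_i(\tdel_{u_{i+1}/u_i}e_i^{(i)})$, then use Theorem~\ref{thm:tdele} to evaluate each factor as $e_{i-|A_i|}([i]\setminus A_i)=\prod_{j\le i,\,j\notin A_i}x_j$ (or $0$ when $A_i(u_i,u_{i+1})$ does not exist), and finally apply $u_i$. Your concern about the indexing in the iterated Leibniz rule is legitimate but easily resolved by the commutativity of the polynomial factors, as you note.
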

\begin{proof}
Write
\[\tschub_{w_0} = x_1^{n-1}x_2^{n-2} \cdots x_{n-1} = e_{n-1}^{(n-1)} e_{n-2}^{(n-2)} \cdots e_1^{(1)}.\]
By Proposition~\ref{prop:leibniz} (b), we can then express any $\tschub_w = T_{w^{-1}w_0}(\tschub_{w_0}) = T_{w^{-1}w_0}(\prod_{i=1}^{n-1} e_i^{(i)})$ as a sum of products
\[\tschub_w = \sum_{u_1, \dots, u_{n-1}} \prod_{i=1}^{n-1} u_i(\tdel_{u_{i+1}/u_i} e_i^{(i)}).\]
Hence by Theorem~\ref{thm:tdele}, for each sequence $u_1, u_2, \dots, u_{n-1}, u_n = w^{-1}w_0$ for which $A_i=A_i(u_i, u_{i+1})$ exists, we get a contribution of \[\prod_{i=1}^{n-1} u_i(e_{i-|A_i|}([i] \setminus A_i)) = \prod_{i=1}^{n-1} \prod_{\substack{j \leq i\\j \notin A_i}}x_{u_i(j)},\]
as desired.
\end{proof}

Note that by Remark~\ref{rmk:ak}, if $j \leq i$, then $j \notin A_i$ if and only if $u_i(j) = u_{i+1}(j)$.

\begin{ex} \label{ex:123}
	Let $w = 123 \in S_3$, so that $u_3 = w^{-1}w_0 = 321$. There are then nine possibilities for $u_2$ and $u_1$, as shown in the diagram below. Each edge is labeled with the set $\{u_i(j) \mid j \leq i, \; j \notin A_i(u_i, u_{i+1})\}$ (or is left unlabeled if that set is empty).
	\[
	\begin{tikzpicture}[scale=1.5]
	\node (u3) at (1,0) {$321$};
	\node (u20) at (-3,-1) {$321$};
	\node (u21) at (0,-1) {$312$};
	\node (u22) at (2,-1) {$123$};
	\node (u23) at (3.5,-1) {$213$};
	\node (u10) at (-4,-2) {$321$};
	\node (u11) at (-3,-2) {$231$};
	\node (u12) at (-2,-2) {$123$};
	\node (u13) at (-1,-2) {$312$};
	\node (u14) at (0,-2) {$132$};
	\node (u15) at (1,-2) {$213$};
	\node (u16) at (2,-2) {$123$};
	\node (u17) at (3,-2) {$213$};
	\node (u18) at (4,-2) {$123$};
	\draw (u3)--node[above]{$23$}(u20) (u3)--node[left]{$3$}(u21) (u3)--node[right]{$2$}(u22) (u3)--(u23);
	\draw (u20)--node[left]{$3$}(u10) (u20)--(u11) (u20)--(u12);
	\draw (u21)--node[left]{$3$}(u13) (u21)--(u14) (u21)--(u15);
	\draw (u22)--node[left]{$1$}(u16);
	\draw (u23)--node[left]{$2$}(u17) (u23)--(u18);
	\end{tikzpicture}
	\]
	Summing the contribution for each chain gives
	\begin{align*}
	\tschub_{123} &= x_2x_3\cdot (x_3 + 2) + x_3 \cdot (x_3+2) + x_2 \cdot x_1 + 1 \cdot (x_2 + 1)\\
	&= x_2x_3^2 + 2x_2x_3 + x_3^2 + 2x_3 + x_1x_2 + x_2 + 1.
	\end{align*}
\end{ex}

\subsection{Relation to $\schub_w$} \label{sec:sw}
Since $T_i = s_i + \del_i$, the part of $\tschub_w$ of minimum degree is just $\schub_w$. We can verify that the combinatorial description for $\tschub_w$ in the previous section recovers the known combinatorial descriptions of $\schub_w$.

In order for $u_1, \dots, u_{n-1}, u_n = w^{-1}w_0$ to contribute to $\schub_w$, we must have the $A_i(u_i, u_{i+1})$ be as large as possible. This implies that we must have $u_1 = id$ and $|A_i(u_i, u_{i+1})| = \ell(u_{i+1}) - \ell(u_i)$ for all $i$ (so that in $(\dagger)$, the length must go up by exactly $1$ at each step).

We claim that these conditions imply that $u_i(b) = b$ for $b > i$. Indeed, if the claim holds for $u_i$, then in $(\dagger)$ we must have $\ell(u_i) + 1 = \ell(u_is_{a_1b_1})$. This is only possible if $b_1 = i+1$ (since otherwise $a_1 < i+1 < b_1$ and $u_i(a_1) < u_i(i+1) = i+1 < u_i(b_1) = b_1$). But since we can reorder commuting transpositions without changing the validity of $(\dagger)$, this implies that actually all $b_j$ must equal $i+1$. Hence $u_{i+1}(b) = u_i(b) = b$ for $b>i+1$.

We must therefore have $u_{i+1} = u_{i} \cdot (i+1\;a_r\;a_{r-1}\;\cdots\;a_1)$ with $i+1=u_{i}(i+1) > u_{i}(a_1) > \cdots > u_{i}(a_r)$. In particular, letting $a_j' = u_{i}(a_j)$ so that $i+1 > a_1' > \cdots > a_r'$, we can write this as
\[w_0 u_{i+1}^{-1} = w_0 u_{i}^{-1}(i+1\;a_1'\;\cdots \;a_r').\]

Given a reduced pipe dream for $w$ with pipes labeled $1, \dots, n$ along the top, define $w_0u_i^{-1}$ to be the permutation obtained by reading the order of the pipes down the left side of the $(n+1-i)$th column, followed by $n-i, n-i-1, \dots, 2, 1$. Then $w_0u_i^{-1}$ and $w_0u_{i+1}^{-1}$ are related exactly as dictated by the equation above, where $a_0', \dots, a_r'$ are the locations of the elbows in column $n-i$. Therefore the locations of the crosses in this column correspond to elements of $A_{i}(u_{i},u_{i+1})$. (These are  the rows in which the wires go straight across this column as implied by Remark~\ref{rmk:ak}.) One can also check that $u_n=w^{-1}w_0$, $u_1=id$, and that the length condition on $u_i$ and $u_{i+1}$ is equivalent to the pipe dream being reduced. It follows that this map gives a bijection between pipe dreams and sequences $u_1, \dots, u_n$ contributing to $\schub_w$, as desired.

\begin{ex}
	The pipe dream for $2431$ shown below corresponds to the sequence $u_1, u_2, u_3, u_4$ as shown.
	
			\[
			\vc{
				\begin{tikzpicture}[scale=.7]
				\draw[black!50] (0,0) grid (3,-1) (0,-1) grid (2,-2) (0,-2) grid (1,-3) (3,0)--(4,0) (0,-3)--(0,-4);
				\draw[red,thick] (.5,0)--(.5,-3) to[out = -90, in=0] (0,-3.5);
				\draw[orange,thick] (1.5,0) to[out=-90,in=0] (1,-.5)--(0,-.5);
				\draw[green!50!black,thick] (2.5,0) to[out=-90,in=0] (2,-.5) to [out=180, in=90] (1.5,-1)--(1.5,-2) to [out=-90,in=0] (1,-2.5) to (0,-2.5);
				\draw[blue,thick] (3.5,0) to [out=-90,in=0] (3,-.5) to[out=180,in=90] (2.5,-1)to[out=-90,in=0] (2,-1.5)--(0,-1.5);
				\end{tikzpicture}
			}
			\qquad
			\begin{array}{ccc}
			w_0u_4^{-1} = 2431 && u_4=2314\\
			w_0u_3^{-1} = 2431 && u_3=2314\\
			w_0u_2^{-1} = 3421 && u_2=2134\\
			w_0u_1^{-1} = 4321 && u_1=1234 
			\end{array}
			\]

	Note $A_i(u_i,u_{i+1})$, which are the numbers at most $i$ in the same location in both $u_i$ and $u_{i+1}$, give the location of the crosses in column $4-i$.

\end{ex}

\subsection{Double polynomials}

Just as one can define double Schubert polynomials, one can define a ``double version'' of the twisted Schubert polynomials in two sets of variables $x_1, \dots, x_n$ and $y_1, \dots, y_{n}$ by
\[\tschub_w(x,y) = T_{w^{-1}w_0} \prod_{i+j \leq n} (x_i-y_j),\]
where the operators $T_i$ act only on the $x$-variables. Using a similar argument as in Theorem~\ref{thm:positive}, we can give a combinatorial formula for $\tschub(x,y)$.

\begin{thm} \label{thm:doublepos}
	For $w \in S_n$,
	\[\tschub_w(x,y) = \sum_{u_1, \dots, u_{n-1}} \prod_{i=1}^{n-1} \prod_{\substack{j \leq i\\j \notin A_i}} (x_{u_i(j)} - y_{n-i}),\]
	where $u_n = w^{-1}w_0$, and the sum ranges over all sequences $u_1, u_2, \dots, u_{n-1}$ such that $A_i = A_i(u_i, u_{i+1})$ exists.
	
	In particular, $\tschub_w(x,y)$ is a polynomial with nonnegative coefficients in variables $x_i - y_j$.
\end{thm}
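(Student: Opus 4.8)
The plan is to adapt the proof of Theorem~\ref{thm:positive}, replacing the monomial $x_1^{n-1}x_2^{n-2}\cdots x_{n-1}$ by a product of ``shifted'' elementary symmetric polynomials. The key observation is the factorization
\[
\prod_{i+j\le n}(x_i-y_j)=\prod_{i=1}^{n-1}\widehat{e}_i,\qquad
\widehat{e}_i:=\prod_{l=1}^{i}(x_l-y_{n-i}),
\]
which follows by substituting $j=n-i$ in the product on the left. Note that $\widehat{e}_i$ is the top elementary symmetric polynomial in the $i$ shifted variables $x_1-y_{n-i},\dots,x_i-y_{n-i}$; in particular it is symmetric in $x_1,\dots,x_i$, and the $y$-variables behave as scalars for the operators $T_j$ and $\tdel_{w/v}$, which act only on the $x$-variables.

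First I would record the action of $\tdel_{w/v}$ on $\widehat{e}_i$. Expanding $\widehat{e}_i=\sum_{m=0}^{i}(-y_{n-i})^{i-m}e_m^{(i)}$, using linearity, and applying Theorem~\ref{thm:tdele} termwise, one uses the fact that the set $A=A_i(v,w)$ produced there does not depend on $m$ to recognize $\sum_{m=0}^{i}(-y_{n-i})^{i-m}e_{m-|A|}([i]\setminus A)$, after reindexing, as the expansion of $\prod_{l\in[i]\setminus A}(x_l-y_{n-i})$. This gives
\[
\tdel_{w/v}\,\widehat{e}_i=
\begin{cases}
\displaystyle\prod_{l\in[i]\setminus A}(x_l-y_{n-i}) & \text{if } A=A_i(v,w)\text{ exists,}\\
0 & \text{otherwise.}
\end{cases}
\]
(Alternatively, since $\del_{ab}$ has the same formula in the shifted variables $x_l-y_{n-i}$, this is just the $e_m^{(k)}$-statement of Theorem~\ref{thm:tdele} applied to those variables.) Applying $u_i$, which permutes only the $x$'s, turns this into $\prod_{l\in[i]\setminus A}(x_{u_i(l)}-y_{n-i})$.

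With this in hand the argument is parallel to that of Theorem~\ref{thm:positive}: using the factorization above and iterating the Leibniz rule Proposition~\ref{prop:leibniz} (b) exactly as there gives
\[
\tschub_w(x,y)=T_{w^{-1}w_0}\Bigl(\prod_{i=1}^{n-1}\widehat{e}_i\Bigr)
=\sum_{u_1,\dots,u_{n-1}}\prod_{i=1}^{n-1}u_i\bigl(\tdel_{u_{i+1}/u_i}\widehat{e}_i\bigr),
\]
with $u_n=w^{-1}w_0$. By the computation above, a sequence $u_1,\dots,u_{n-1}$ contributes a nonzero term precisely when $A_i=A_i(u_i,u_{i+1})$ exists for every $i$, in which case the term equals $\prod_{i=1}^{n-1}\prod_{j\le i,\ j\notin A_i}(x_{u_i(j)}-y_{n-i})$, which is the claimed formula. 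Since each such term is a product of factors of the form $x_a-y_b$ with coefficient $1$, the final positivity assertion follows immediately.

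I do not expect a serious obstacle. The only genuinely new point is spotting the factorization $\prod_{i+j\le n}(x_i-y_j)=\prod_i\widehat{e}_i$ and checking that $\tdel_{w/v}$ interacts with the shift by $y_{n-i}$ as cleanly as with the unshifted variables; everything else runs in parallel with Theorem~\ref{thm:positive}. The one spot requiring a little care is the degenerate behaviour when $[i]\setminus A_i=\emptyset$ (the corresponding factor being the empty product $1$) or when some $A_i$ fails to exist (the whole term vanishing), but both are handled uniformly by the case split above.
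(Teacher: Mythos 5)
Your proposal is correct and follows essentially the same route as the paper: factor $\prod_{i+j\le n}(x_i-y_j)$ as $\prod_{i=1}^{n-1}\prod_{l\le i}(x_l-y_{n-i})$, observe that $\tdel_{w/v}$ acts on each shifted factor exactly as Theorem~\ref{thm:tdele} dictates (your ``alternatively'' remark about $\del_{ab}$ commuting with the substitution $x_l\mapsto x_l-y_{n-i}$ is precisely the paper's argument), and then iterate the Leibniz rule of Proposition~\ref{prop:leibniz}(b) as in Theorem~\ref{thm:positive}.
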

\begin{proof}
	Write
	\[\prod_{i+j \leq n}(x_i-y_j) = \prod_{i=1}^{n-1} \prod_{j=1}^{i}(x_j - y_{n-i}).\]
	For fixed $i$, let $x_j' = x_j - y_{n-i}$, so that $\prod_{j=1}^i (x_j-y_{n-i}) = \prod_{j=1}^i x_j'$. For any polynomial $f(x_1, \dots, x_n)$, we have
	$\del_{ab}(f(x_1',\dots, x_n')) = (\del_{ab}f)(x_1', \dots, x_n')$, so Lemma~\ref{lemma:del-e} and Theorem~\ref{thm:tdele} still hold if we replace $x_j$ with $x_j'$ in the definition of $e_m$.
	
	Hence, using Proposition~\ref{prop:leibniz} as in Theorem~\ref{thm:positive}, we get that
	\begin{align*}
	\tschub_w(x,y) &= \sum_{u_1, \dots, u_{n-1}} \prod_{i=1}^{n-1} u_i\left(\tdel_{u_{i+1}/u_i} \prod_{j=1}^{n-i}(x_j-y_{n-i})\right)\\
	&= \sum_{u_1, \dots, u_{n-1}} \prod_{i=1}^{n-1} \prod_{\substack{j \leq i\\j \notin A_i}} (x_{u_i(j)} - y_{n-i}),
	\end{align*}
	as desired.
\end{proof}

\begin{ex} \label{ex:123a}
	Let $w=123 \in S_3$ as in Example~\ref{ex:123}. Using the same diagram as in that earlier example, we replace any $x_j$ coming from an edge label in the top row with $x_j-y_1$ and any $x_j$ from an edge label in the second row with $x_j-y_2$. This gives
	\begin{align*}
	\tschub_{123}(x,y) &= (x_2-y_1)(x_3-y_1) \cdot (x_3-y_2 + 2) + (x_3-y_1) \cdot (x_3-y_2+2)\\
	&\quad{} + (x_2-y_1) \cdot (x_1-y_2) + 1 \cdot (x_2-y_2+1)\\
	&=(1+x_2-y_1)(1+x_3-y_1)(1+x_3-y_2)+(x_2-y_1)(x_1-y_1).
	\end{align*}
\end{ex}


It is easy to check that the bijection described in \S\ref{sec:sw} also explains the connection between the combinatorial formulas for $\tschub_{w}(x,y)$ and $\schub_w(x,y)$.

\subsection{Localization}

Define the localization of $\tschub_v$ at $w$ to be the specialization 
\[\tschub_v(wy,y) = \tschub_v(y_{w(1)}, \dots, y_{w(n)}; y_1, \dots, y_n).\]
In this section, we will give a combinatorial formula for this localization which, as in the ordinary Schubert case, will be a polynomial in $y_b-y_a$, $b>a$, with positive coefficients.

\begin{ex}
	Let $v = 123$. Using the formula for $\tschub_{123}(x,y)$ in Example~\ref{ex:123a}, we can compute the localizations at $w$ for each $w \in S_3$. After some simplification and factorization, we get the following formulas for $\tschub_{123}(wy, y)$:
	\begin{align*}
	w&=321\colon&&1 + (y_2-y_1)(y_3-y_2)\\
	w&=312\colon&&1+y_2-y_1\\
	w&=231\colon&&1+y_3-y_2\\
	w&=213\colon&&(1+y_3-y_1)(1+y_3-y_2)\\
	w&=132\colon&&(1+y_2-y_1)(1+y_3-y_1)\\
	w&=123\colon&&(1+y_2-y_1)(1+y_3-y_1)(1+y_3-y_2)
	\end{align*}
\end{ex}

Note the conspicuous factors of $1+y_b-y_a$ whenever $a<b$ and $w^{-1}(a) < w^{-1}(b)$ in this example. These are precisely the pairs that do not appear as some $(\alpha_j, \beta_j)$ for a given reduced word $w^{-1} = s_{i_1} \cdots s_{i_\ell}$.

We are now ready to state a formula for the localizations of $\schub_v$. (An equivalent formula can also be found in \cite{AMSS, Su}.)

\begin{thm} \label{thm:local}
	Let $v,w \in S_n$, and let $w^{-1} = s_{i_1} \cdots s_{i_\ell}$ be a reduced expression with $\alpha_j$ and $\beta_j$ defined as in $(*)$. Then the localization of $\tschub_v$ at $w$ equals
	\[\tschub_v(wy, y) = \prod_{\substack{1 \leq a < b \leq n\\w^{-1}(a) < w^{-1}(b)}} (1+y_b-y_a) \cdot \sum_J\prod_{j \in J}(y_{\beta_j}-y_{\alpha_j}),\]
	where the sum ranges over all subsets $J \subset [\ell]$ such that $v^{-1} = \prod_{j\in J} s_{i_j}$ (not necessarily reduced).
\end{thm}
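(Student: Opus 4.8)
The plan is to prove the identity by induction on $\ell(w_0v)$, building $v$ downward from $w_0$ by repeatedly right-multiplying by simple reflections and feeding the defining recursion $\tschub_{vs_i} = T_i\tschub_v$ through a \emph{localization recursion}. Since $T_i = s_i + \del_i$ acts on the $x$-variables, and applying $s_i$ before the substitution $x=wy$ has the same effect as the substitution $x=(ws_i)y$ (because $ws_i$ just transposes the $i$th and $(i+1)$st entries of the one-line notation of $w$), one obtains, for all $v$, $w$, and $i$,
\[\tschub_{vs_i}(wy,y) = \tschub_v((ws_i)y,y) + \frac{\tschub_v(wy,y) - \tschub_v((ws_i)y,y)}{y_{w(i)} - y_{w(i+1)}}.\]
The base case $v=w_0$ is immediate from $\tschub_{w_0}(x,y) = \prod_{i+j\leq n}(x_i-y_j)$: its localization at $w$ vanishes unless $w=w_0$, where it equals $\prod_{a<b}(y_b-y_a)$, and this matches the claimed formula (whose prefactor is then empty and whose only admissible $J$ is $[\ell]$).

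Write the proposed right-hand side as $P(w)\cdot S_v(w)$, where $P(w) = \prod_{a<b:\, w^{-1}(a)<w^{-1}(b)}(1+y_b-y_a)$ and $S_v(w)$ is the ``non-reduced Billey sum'' $\sum_J \prod_{j\in J}(y_{\beta_j}-y_{\alpha_j})$ over all $J\subseteq[\ell]$ with $\prod_{j\in J}s_{i_j}=v^{-1}$. I would first record that $S_v(w)$ is independent of the reduced word chosen for $w^{-1}$: invariance under commutation moves is clear, while for a braid move $\cdots s_as_bs_a\cdots \leftrightarrow \cdots s_bs_as_b\cdots$ one groups the eight subsets of the braid triple by their product and compares weights. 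The only nonformal point is an additivity relation among the three edge weights at the triple, and this holds because reducedness forces the three wires passing through it (entering in rows $a,a+1,a+2$ and leaving in rows $a+2,a+1,a$) never to cross again, so their final labels $c_1>c_2>c_3$ are linearly ordered and $(y_{c_1}-y_{c_2})+(y_{c_2}-y_{c_3}) = y_{c_1}-y_{c_3}$.

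The combinatorial engine of the proof is then a single transfer identity: for any $u\in S_n$ and any $\omega\in S_n$ with $\omega(i)>\omega(i+1)$,
\[S_u(\omega) = S_u(\omega s_i) + (y_{\omega(i)} - y_{\omega(i+1)})\, S_{us_i}(\omega s_i).\]
This follows by choosing a reduced word for $\omega^{-1}$ beginning with $s_i$ (possible since $\ell(\omega s_i)<\ell(\omega)$); deleting that first letter yields a reduced word for $(\omega s_i)^{-1}$ with the same later $(\alpha_j,\beta_j)$-data, and the first position carries weight $y_{\omega(i)}-y_{\omega(i+1)}$, so splitting the sum defining $S_u(\omega)$ according to whether $J$ contains the first position gives the identity. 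One also needs the prefactor ratio: since $\omega^{-1}$ and $(\omega s_i)^{-1}$ differ only by transposing the values $i$ and $i+1$, exactly one factor of $P$ changes, so $P(\omega s_i) = (1+y_{\omega(i)}-y_{\omega(i+1)})\,P(\omega)$ whenever $\omega(i)>\omega(i+1)$.

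For the inductive step I would assume the formula for $\tschub_v$ and take $i$ with $\ell(vs_i)<\ell(v)$, and verify that $P(w)S_{vs_i}(w)$ equals the right-hand side of the localization recursion, $P(ws_i)S_v(ws_i) + \frac{P(w)S_v(w) - P(ws_i)S_v(ws_i)}{y_{w(i)} - y_{w(i+1)}}$. If $w(i)>w(i+1)$, apply the transfer identity with $\omega=w$ to $u=v$ and to $u=vs_i$ to express $S_{vs_i}(w)$ through $S_v(w)$ and $S_v(ws_i)$, and use $P(ws_i)=(1+y_{w(i)}-y_{w(i+1)})P(w)$; if $w(i)<w(i+1)$, apply it with $\omega=ws_i$ (which has a descent at $i$) and $u=v$ to solve for $S_{vs_i}(w)$, and use $P(w)=(1+y_{w(i+1)}-y_{w(i)})P(ws_i)$. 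In either case the right-hand side collapses after a short algebraic simplification to $P(w)S_{vs_i}(w)$, which completes the induction since every $v$ is reached from $w_0$ by such moves. The main obstacle is the combinatorial core above --- the well-definedness of $S_v(w)$ and the transfer identity --- which is really a matter of carefully comparing reduced words for $w^{-1}$ and $(ws_i)^{-1}$ and their $(\alpha_j,\beta_j)$-data; granting those, the inductive step itself is routine.
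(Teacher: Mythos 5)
Your proof is correct and takes essentially the same route as the paper: derive the localization recurrence from $T_i = s_i + \del_i$ (the paper's Lemma~\ref{lem:recurrence}), verify the base case $v=w_0$, prove a transfer identity for the non-reduced Billey sum $S_v(w)=Q(v,w)$ together with the prefactor ratio for $P$, and conclude by induction that $P(w)\,S_v(w)$ satisfies the recurrence. Your explicit case split on whether $i$ is a descent of $w$ reorganizes the paper's algebraic derivation of its two recurrence identities, and your argument for braid-move invariance of $S_v(w)$ spells out a detail the paper only asserts.
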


Note that we should have that $\schub_v(wy,y)$ is the minimum degree part of $\tschub_v(wy,y)$. Indeed, in this case we can ignore the first product and restrict to the case when $J$ has minimum possible size $\ell(v)$. This formula then immediately reduces to Theorem~\ref{thm:billey}.

To prove this result, we first prove the following lemma which gives a recurrence for these localizations.

\begin{lemma} \label{lem:recurrence}
	For $v,w \in S_n$, $i \in [n-1]$,
	\[(1+y_{w(i+1)}- y_{w(i)})\tschub_v(ws_iy,y) =(y_{w(i+1)}-y_{w(i)})\tschub_{vs_i}(wy,y) +\tschub_v(wy,y).\]
\end{lemma}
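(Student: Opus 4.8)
The plan is to unwind the definition of $\tschub_{vs_i}(x,y)$ as a twisted operator applied to $\tschub_v(x,y)$, clear the divided‑difference denominator, and then localize at $w$. Concretely, since $(vs_i)^{-1}w_0 = s_i v^{-1}w_0$ and the $T_i$ satisfy the Coxeter relations, we have $\tschub_{vs_i}(x,y) = T_i\tschub_v(x,y) = (s_i + \del_i)\tschub_v(x,y)$, where $T_i$ acts only on the $x$-variables. Using $\del_i f = \tfrac{f - s_if}{x_i-x_{i+1}}$ and multiplying through by $x_i - x_{i+1}$ turns this into the polynomial identity
\[(x_i-x_{i+1})\,\tschub_{vs_i}(x,y) = (x_i-x_{i+1})\,(s_i\tschub_v)(x,y) + \tschub_v(x,y) - (s_i\tschub_v)(x,y).\]

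The one point requiring care is the bookkeeping between the $s_i$-action on variables and the localization substitution. I would record the following fact: for any polynomial $f$ in the $x$-variables, substituting $x_j \mapsto y_{w(j)}$ into $s_if$ produces $f(ws_iy,y)$, since after the transposition of the arguments in positions $i$ and $i+1$ the $j$th argument of $f$ becomes $y_{(ws_i)(j)}$ for every $j$. Applying the substitution $x_j \mapsto y_{w(j)}$ to the displayed identity — so that $x_i - x_{i+1}$ becomes $y_{w(i)}-y_{w(i+1)}$, $\tschub_v(x,y)$ becomes $\tschub_v(wy,y)$, and $(s_i\tschub_v)(x,y)$ becomes $\tschub_v(ws_iy,y)$ — yields
\[(y_{w(i)}-y_{w(i+1)})\,\tschub_{vs_i}(wy,y) = (y_{w(i)}-y_{w(i+1)})\,\tschub_v(ws_iy,y) + \tschub_v(wy,y) - \tschub_v(ws_iy,y).\]

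It then remains only to rearrange. Collecting the two $\tschub_v(ws_iy,y)$ terms gives $(y_{w(i)}-y_{w(i+1)}-1)\tschub_v(ws_iy,y) + \tschub_v(wy,y)$ on the right; negating both sides and moving the $\tschub_{vs_i}$ term over produces exactly
\[(1+y_{w(i+1)}-y_{w(i)})\,\tschub_v(ws_iy,y) = (y_{w(i+1)}-y_{w(i)})\,\tschub_{vs_i}(wy,y) + \tschub_v(wy,y),\]
as claimed. I do not expect a genuine obstacle: the argument is a short, direct computation. The only thing to get right is the localization identity $(s_if)\big|_{x_j \mapsto y_{w(j)}} = f(ws_iy,y)$ together with the attendant sign, and the fact that we are invoking the \emph{right} recursion $\tschub_{vs_i} = T_i\tschub_v$ (valid for all $v$ and $i$, with no length restriction), which is precisely why $ws_i$ rather than $s_iw$ appears on the left-hand side.
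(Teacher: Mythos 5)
Your proof is correct and follows essentially the same route as the paper: both unwind $\tschub_{vs_i} = T_i\tschub_v = (s_i+\del_i)\tschub_v$, substitute $x_j \mapsto y_{w(j)}$ using the observation that this sends $s_i\tschub_v$ to $\tschub_v(ws_iy,y)$, and then clear denominators and rearrange. The only cosmetic difference is that the paper groups the $\tschub_v$ and $s_i\tschub_v$ terms before substituting, whereas you substitute first and collect terms afterward.
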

\begin{proof}
	We have
	 \[\tschub_{vs_i} = T_i \tschub_v(x,y) = (s_i + \del_i) \tschub_v = \frac{1}{x_i-x_{i+1}} \tschub_v + \left(1 - \frac{1}{x_i-x_{i+1}}\right)s_i\tschub_v.\]
	 Hence plugging in $y_{w(j)}$ for $x_j$ gives
	 \[\tschub_{vs_i}(wy,y) = \frac{1}{y_{w(i)}-y_{w(i+1)}} \tschub_v(wy,y) + \left(1-\frac{1}{y_{w(i)}-y_{w(i+1)}}\right)\tschub_v(ws_iy,y).\]
	 Clearing denominators and rearranging gives the desired result.
\end{proof}

Observe that if all of the localizations of $\tschub_v$ are known, then Lemma~\ref{lem:recurrence} can be used to find all of the localizations of $\tschub_{vs_i}$ (and hence, by iterating, for all twisted Schubert polynomials). Similarly, if all of the localizations at $w$ are known, Lemma~\ref{lem:recurrence} can be used to find all of the localizations at $ws_i$ (and hence, by iterating, at all permutations).

We can now prove the localization formula.

\begin{proof}[Proof of Theorem~\ref{thm:local}]
	Denote \begin{align*}
	P(w) &=  \prod_{\substack{1 \leq a < b \leq n\\w^{-1}(a) < w^{-1}(b)}} (1+y_b-y_a),\\
	Q(v,w) &= \sum_J\prod_{j \in J}(y_{\beta_j}-y_{\alpha_j})
	\end{align*}
	as in the desired expression. It is easy to verify that $Q(v,w)$ does not depend on the reduced expression for $w^{-1}$ (by checking that it is unchanged upon applying the relevant Coxeter relations).
	
	We first check that $\tschub_v(wy,y) = P(w)Q(v,w)$ when $v=w_0$. In this case, note that the only way to have $w(i) \neq j$ for all $i+j \leq n$ is if $w=w_0$. Therefore,
	\[\tschub_{w_0}(wy,y) = \prod_{i+j \leq n} (y_{w(i)}-y_j) = \begin{cases}
	\prod_{a < b} (y_b-y_a) &\text{if }w = w_0,\\
	0&\text{otherwise}.
	\end{cases}\]
	It is easy to check that the only way for the desired formula to be nonzero for $v = w_0$ is if $w=w_0$, in which case $P(w_0)=1$ and $Q(w_0,w_0) = \prod_{a<b} (y_b-y_a)$, as desired.
	
	Suppose $\ell(w) < \ell(ws_i)$. If $w^{-1} = s_{i_1} \cdots s_{i_\ell}$ is a reduced expression, then so is $s_iw^{-1} = s_{i_0}s_{i_1} \cdots s_{i_\ell}$ with $i_0=i$. Since any subexpression for $v^{-1}$ in $s_iw^{-1}$ either contains the initial $s_{i_0}$ or does not, we find that
	\begin{equation} \label{eq:wsi}
	Q(v,ws_i) = (y_{w(i+1)}-y_{w(i)})Q(vs_i,w) + Q(v,w).
	\end{equation}
	Multiplying both sides by $P(w)$ and using the identity $(1+y_{w(i+1)}-y_{w(i)})P(ws_i) = P(w)$ shows that
	\begin{equation} \label{eq:rec1}
	(1+y_{w(i+1)}-y_{w(i)})P(ws_i)Q(v,ws_i) = (y_{w(i+1)}-y_{w(i)})P(w)Q(vs_i,w) + P(w)Q(v,w).
	\end{equation}
	Similarly, replacing $v$ with $vs_i$ in \eqref{eq:wsi} gives
	\begin{equation} \label{eq:wsi2}
	Q(vs_i,ws_i) = (y_{w(i+1)}-y_{w(i)})Q(v,w) + Q(vs_i,w).
	\end{equation}
	Combining \eqref{eq:wsi} and \eqref{eq:wsi2} to eliminate $Q(vs_i,w)$ gives
	\begin{equation}
	(1-(y_{w(i+1)}-y_{w(i)})^2)Q(v,w) = (y_{w(i)}-y_{w(i+1)})Q(vs_i,ws_i) + Q(v,ws_i).
	\end{equation}
	Multiplying both sides by $P(ws_i) = P(w)/(1+y_{w(i+1)}-y_{w(i)})$ then gives
	\begin{equation} \label{eq:rec2}
	(1+y_{w(i)}-y_{w(i+1)})P(w)Q(v,w) = (y_{w(i)}-y_{w(i+1)})P(ws_i)Q(vs_i,ws_i) + P(ws_i)Q(v,ws_i).
	\end{equation}
	
	Together, \eqref{eq:rec1} and \eqref{eq:rec2} imply that $P(w)Q(v,w)$ satisfies the recurrence satisfied by $\tschub_v(wy,y)$ in Lemma~\ref{lem:recurrence} (regardless of whether $\ell(w)$ or $\ell(ws_i)$ is larger), so we must have that $\tschub_v(wy,y) = P(w)Q(v,w)$ everywhere, as desired.
\end{proof}

\section{Conclusion}

In this paper, we have discussed the twisted Schubert polynomials and shown that they have various positivity properties. Still, many questions remain. For instance, the recent results in \cite{AMSS} (see also \cite{Huh, Lee, Stryker}) imply that the class in $H^*(\mathscr F_n) = \CC[x_1, \dots, x_n]/I$ corresponding to $\tschub_w$ for $w \in S_n$ when written in the Schubert basis has coefficients with predictable signs. (This class is, up to an appropriate change of signs, equal to the Chern-Schwarz-MacPherson class of a Schubert cell in the flag variety.) However, a combinatorial interpretation for these coefficients has not yet been described.

It would be interesting to investigate the extent to which the operators $T_i$ preserve monomial positivity. For instance, is it the case that $T_v \schub_w$ is always monomial positive?

%

Finally, in \cite{Kirillov2}, many variants of Schubert, key, and Grothendieck polynomials are considered by varying the operators $T_i$, and various positivity and enumerative properties are explored. Thus it would be interesting to investigate the extent to which the phenomena appearing here generalize to these other variants.

\section{Acknowledgments}
The author would like to thank Alexander Yong for suggesting this line of inquiry and insightful conversations, as well as the organizers of the ``Positivity in Algebraic Combinatorics'' workshop at the Korea Institute for Advanced Study in June 2016.

\bibliography{tschub}
\bibliographystyle{plain}
\end{document}